\numberwithin{equation}{section}
\theoremstyle{plain}
\newtheorem{thm}{Theorem}[section]
\newtheorem{prop}[thm]{Proposition}
\newtheorem{defi}[thm]{Definition}
\newtheorem{lem}[thm]{Lemma}
\newtheorem{cor}[thm]{Corollary}
\newtheorem{eg}[thm]{Example}
\theoremstyle{remark}
\newtheorem{rema}[thm]{Remark}
\title{Generalized Onsager algebras}
\author{Jasper V. Stokman}
\address{KdV Institute for Mathematics, University of Amsterdam,
Science Park 105-107, 1098 XG Amsterdam, The Netherlands.}
\email{j.v.stokman@uva.nl}
\begin{document}
\keywords{}
\maketitle
\begin{abstract}
Let $\mathfrak{g}(A)$ be the Kac-Moody algebra with respect to a symmetrizable generalized Cartan matrix $A$. We give an explicit presentation of the fix-point Lie subalgebra $\mathfrak{k}(A)$ of 
$\mathfrak{g}(A)$ with respect to the Chevalley involution. It is a presentation of $\mathfrak{k}(A)$
involving inhomogeneous versions of the Serre relations, or, from a different perspective, a presentation generalizing the Dolan-Grady presentation of the Onsager algebra. In the finite and untwisted affine case we explicitly compute the structure constants of $\mathfrak{k}(A)$ in terms of a Chevalley type basis of $\mathfrak{k}(A)$. For the symplectic Lie algebra and its untwisted affine extension we explicitly describe the one-dimensional representations of $\mathfrak{k}(A)$.
\end{abstract}
\setcounter{tocdepth}{2}
\section{Introduction}
The Onsager algebra is the infinite dimensional complex Lie algebra with linear basis $\{A_k,G_m\}_{k\in\mathbb{Z}, m\in\mathbb{Z}_{>0}}$ and Lie bracket 
\begin{equation}\label{Onsagerstructure}
[A_k,A_\ell]=G_{\ell-k},\qquad [G_m,G_n]=0,\qquad [G_m,A_k]=2(A_{k+m}-A_{k-m}),
\end{equation}
for $k,\ell\in\mathbb{Z}$ and $m,n\in\mathbb{Z}_{>0}$, where $G_{-m}:=-G_m$ ($m>0$)
and $G_0:=0$. It first appeared in Onsager's \cite{O} paper on the two-dimensional Ising model in a zero magnetic field.
It is generated as Lie algebra by $B_0:=-A_{-1}$ and $B_1:=A_0$. The corresponding defining relations, known as the Dolan-Grady \cite{DG} relations, are given by
\[
[B_0,[B_0,[B_0,B_1]]]=-4[B_0,B_1],\qquad
[B_1,[B_1,[B_1,B_0]]]=-4[B_1,B_0].
\]
The Onsager algebra plays an important role in integrable systems, representation theory and special functions, see, e.g., \cite{O,P,DG,D1,D2,Ro,K,HT} and references therein. An important reason for its appearance in many different contexts is the fact that the Onsager algebra is isomorphic to the fix-point Lie subalgebra of the affine Lie algebra $\widetilde{\mathfrak{sl}}_2$ with respect to the Chevalley involution, a result which is due to Roan \cite[Prop. 1]{Ro}.

In this paper we define a generalization $\mathcal{L}(A)$  of the Onsager algebra
in terms of explicit generators and relations depending on a symmetrizable generalized Cartan matrix $A$. It reduces
to the Onsager algebra in Dolan-Grady form
when $A$ is of type $A_1^{(1)}$.

We show that
 $\mathcal{L}(A)$ is isomorphic to the fix-point Lie subalgebra of the Kac-Moody algebra $\mathfrak{g}(A)$ associated to $A$ with respect to its Chevalley involution.
For $A$ of type $A_1^{(1)}$ this is the earlier mentioned result of 
Roan \cite[Prop. 1]{Ro}.
For $A$ of type $A_n^{(1)}$ (resp. $D_n^{(1)}$) we recover the results
of Uglov and Ivanov \cite{UI} (resp. Date and Usami \cite{DU}). 
Their techniques partly
 rely on the loop presentation of the associated affine Lie algebra, which is specific to the affine case. 
 
Much work has recently been done on quantum group analogues of generalized Onsager algebras (see, e.g., \cite{BB,K2} and references therein). Baseilhac and Belliard \cite{BB} introduced generalized $q$-Onsager algebras for $A$ of affine type and provided algebra homomorphisms mapping the generalized $q$-Onsager algebras to the associated quantum affine algebra $U_q(\mathfrak{g}(A))$. 
Kolb \cite{K2} puts these results in the more general framework of quantum symmetric Kac-Moody pairs, which concern quantum analogues of fix-point Kac-Moody subalgebras with respect to involutive automorphisms of the second kind. In this general context Kolb \cite{K2} gave a detailed study of the algebraic structures of the associated fix-point Kac-Moody Lie subalgebras and their quantum analogs, leading in particular cases to explicit identifications with Baseilhac's and Belliard's \cite{BB} generalized $q$-Onsager algebras. The presentations \cite[\S 7]{K2} and \cite[\S 3]{BK} of Kolb's coideal subalgebras in the general context are rather intricate and only explicit when the off-diagonal Cartan integers are $\geq -3$ (for $A$ of finite type, it goes back to the work of Letzter \cite[\S 7]{Le2}). I hope that the present paper will
be a useful step towards a complete and explicit algebraic presentation of Kolb's \cite{K2} coideal subalgebras. 

The content of the paper is as follows.
In Section \ref{OnsagerSection} we introduce the generalized Onsager algebra $\mathcal{L}(A)$
associated to 
a symmetrizable generalized Cartan matrix $A$. We prove that it is isomorphic to the 
fix-point Lie subalgebra $\mathfrak{k}(A)$ of $\mathfrak{g}(A)$ with respect to the Chevalley involution. We define a filtration on $\mathcal{L}(A)$ such that the
associated graded Lie algebra is isomorphic to the graded nilpotent Lie subalgebra of 
$\mathfrak{g}(A)$ generated by the positive root vectors. Finally, we describe the one-dimensional representations of $\mathcal{L}(A)$.

In Section \ref{section3} we consider the special case of finite and untwisted affine indecomposable Cartan matrices, corresponding to $\mathfrak{g}(A)$ being a simple Lie algebra and an untwisted affine Lie algebra respectively. We give in these cases an integral form $\mathfrak{k}_{\mathbb{Z}}(A)$ of the fix-point Lie subalgebra $\mathfrak{k}(A)$ and explicitly describe the structure constants in terms of a Chevalley-type basis of $\mathfrak{k}_{\mathbb{Z}}(A)$.
This leads to a generalization of Onsager's original presentation \cite{O} of the Onsager algebra. 
In Section \ref{section4} we further restrict attention to $A$ of type $C_r$ and type $C_r^{(1)}$
 ($r\geq 1$). For type $C_r$ the fix-point Lie subalgebra $\mathfrak{k}(A)$ is isomorphic
 to $\mathfrak{gl}_r(\mathbb{C})$. In this case we make a detailed comparison of the Dolan-Grady type presentation of $\mathfrak{k}(A)$ and its Serre presentation. Both in the finite and affine case we explicitly describe the one-dimensional representations of $\mathfrak{k}(A)$. In the finite case this will play a role in the upcoming paper of the author and Reshetikhin \cite{RS} on vector-valued Harish-Chandra series.\\

\noindent
{\bf Acknowledgments.} I thank Stefan Kolb, Pascal Baseilhac, Samuel Belliard and Nicolai
Reshetikhin for useful discussions on the topic of the paper. Shortly after the appearance on the arXiv of this paper Xinhong Chen, Ming Lu and Weiqiang Wang informed me that they have obtained an explicit presentation of Kolb's coideal subalgebra associated to a quasi-split Kac-Moody quantum symmetric pair using different techniques. Their results have appeared now in the preprint \cite{CLW}.

\section{The generalized Onsager algebra}\label{OnsagerSection}
Let $A=(a_{ij})_{i,j=1}^n$ be 
a symmetrizable generalized Cartan matrix and
\[
(\mathfrak{h}(A),\Pi=\{\alpha_i\}_{i=1}^n,\Pi^\vee=\{h_i\}_{i=1}^n)\]
a realization of $A$. So $\mathfrak{h}(A)$ is a complex vector space of dimension
$2n-\textup{rk}(A)$ and $\Pi\subset\mathfrak{h}^*$, $\Pi^\vee\subset\mathfrak{h}$ are linear independent subsets such that $\alpha_j(h_i)=a_{ij}$. The Kac-Moody algebra $\mathfrak{g}(A)$ 
associated to $A$ is the complex Lie algebra generated by $\mathfrak{h}(A)$ and the Chevalley generators $e_i,f_i$ ($i=1,\ldots,n$), with defining relations
\begin{equation*}
\begin{split}
[h,h^\prime]&=0,\\
[h,e_j]&=\alpha_j(h)e_j,\\
 [h,f_j]&=-\alpha_j(h)f_j,\\
[e_i,f_j]&=\delta_{i,j}h_i,\\
(\textup{ad}\,e_i)^{1-a_{ij}}e_j&=0=(\textup{ad}\,f_i)^{1-a_{ij}}f_j,\qquad i\not=j
\end{split}
\end{equation*}
for  $h,h^\prime\in\mathfrak{h}(A)$ and $1\leq i,j\leq n$. Here we use the definition of the Kac-Moody algebra involving the 
Serre relations
\begin{equation}\label{SerreRelations}
(\textup{ad}\,e_i)^{1-a_{ij}}e_j=0=(\textup{ad}\,f_i)^{1-a_{ij}}f_j,\qquad i\not=j,
\end{equation}
which has been shown in \cite{GK} to be equivalent to the usual definition \cite{Ka} when $A$ is symmetrizable.

We recall here some basic properties of Kac-Moody algebras, see \cite{Ka} for further details. 
Let $\omega$ be the Chevalley involution of $\mathfrak{g}(A)$. It is the involutive automorphism of $\mathfrak{g}(A)$ satisfying $\omega|_{\mathfrak{h}(A)}=-\textup{Id}_{\mathfrak{h}(A)}$ and
$\omega(e_i)=-f_i$ ($1\leq i\leq n$). 
Let $\mathfrak{n}(A)\subseteq\mathfrak{g}(A)$
be the Lie subalgebra of $\mathfrak{g}(A)$ generated by $e_1,\ldots,e_n$. It is a graded nilpotent Lie algebra with the generators $e_i$ having degree one. The defining relations of $\mathfrak{n}(A)$
in terms of the Chevalley generators $e_i$ ($i=1,\ldots,n$) are the Serre relations
$(\textup{ad}\,e_i)^{1-a_{ij}}e_j=0$ ($i\not=j$),  
see \cite{GK}. The triangular decomposition of $\mathfrak{g}(A)$ is 
\[
\mathfrak{g}(A)=\omega(\mathfrak{n}(A))\oplus\mathfrak{h}(A)\oplus\mathfrak{n}(A).
\]
Let $Q:=\bigoplus_{i=1}^n\mathbb{Z}\alpha_i$ be the root lattice of $\mathfrak{g}(A)$. 
For $\alpha\in Q$ set
\[
\mathfrak{g}_\alpha(A):=\{x\in\mathfrak{g}(A) \,\, | \,\, [h,x]=\alpha(h)x\quad \forall\, h\in\mathfrak{h}(A)\}.
\]
The $\mathfrak{g}_\alpha(A)$ are finite dimensional and $\mathfrak{g}_0(A)=\mathfrak{h}(A)$. Write $m(\alpha)$ for the dimension of $\mathfrak{g}_\alpha(A)$. 
Note that $m(\alpha_i)=1$ for $i=1,\ldots,n$. 

The root system of $\mathfrak{g}(A)$ is 
\[
\Phi:=\{\alpha\in Q\setminus\{0\}\,\, | \,\, 
m(\alpha)>0\}.
\] 
It decomposes in positive and negative roots,
\[
\Phi=\Phi_+\cup\Phi_-,\qquad \Phi_{\pm}:=\Phi\cap (\pm Q_+),
\]
where $Q_+:=\bigoplus_{i=1}^n\mathbb{Z}_{\geq 0}\alpha_i$.  The height of a positive root
$\alpha\in\Phi_+$ is defined by
\[
\textup{ht}(\alpha):=\sum_{i=1}^nr_i(\alpha)
\]
with $r_i(\alpha)$ the nonnegative integers such that $\alpha=\sum_{i=1}^nr_i(\alpha)\alpha_i$. The nilpotent
Lie algebras $\mathfrak{n}(A)$ and $\omega(\mathfrak{n}(A))$ decompose as
\[
\mathfrak{n}(A)=\bigoplus_{\alpha\in\Phi_+}\mathfrak{g}_\alpha(A),\qquad
\omega(\mathfrak{n}(A))=\bigoplus_{\alpha\in\Phi_-}\mathfrak{g}_\alpha(A).
\]
The $k$th graded component $\mathfrak{n}_k(A)$ of $\mathfrak{n}(A)$ is 
\[
\mathfrak{n}_k(A)=
\bigoplus_{\alpha\in\Phi_+: \textup{ht}(\alpha)=k}\mathfrak{g}_\alpha(A).
\]

\begin{defi}
Write $\mathfrak{k}(A)$ for the fix-point Lie subalgebra of $\mathfrak{g}(A)$ with respect to
the Chevalley involution $\omega$,
\begin{equation}\label{fixpointkA}
\mathfrak{k}(A):=\{x\in\mathfrak{g}(A) \,\, | \,\, \omega(x)=x\}.
\end{equation}
\end{defi}
For $\alpha\in\Phi_+$ the subspace
$\mathfrak{g}_\alpha(A)\oplus\mathfrak{g}_{-\alpha}(A)$ of $\mathfrak{g}(A)$ is
$\omega$-stable. Write $(\mathfrak{g}_\alpha(A)\bigoplus\mathfrak{g}_{-\alpha}(A))^\omega$
for its subspace of $\omega$-invariant elements.
Choose for $\alpha\in\Phi_+$ a linear basis $\{e_\alpha^{(j)}\}_{j=1}^{m(\alpha)}$
of $\mathfrak{g}_\alpha(A)$ and set
\[
y_\alpha^{(j)}:=e_{\alpha}^{(j)}+\omega(e_\alpha^{(j)})\in\mathfrak{k}(A).
\]
Then $\{y_\alpha^{(j)}\}_{j=1}^{m(\alpha)}$ is a linear basis of
$(\mathfrak{g}_\alpha(A)\oplus\mathfrak{g}_{-\alpha}(A))^{\omega}$ and 
\[
\mathfrak{k}(A)=\bigoplus_{\alpha\in\Phi_+}
(\mathfrak{g}_\alpha(A)\oplus\mathfrak{g}_{-\alpha}(A))^{\omega}.
\]

Consider the elements
\begin{equation}\label{yi}
Y_i:=e_i+\omega(e_i)=e_i-f_i,\qquad  1\leq i\leq n
\end{equation}
in $(\mathfrak{g}_{\alpha_i}(A)\oplus\mathfrak{g}_{-\alpha_i}(A))^{\omega}\subseteq\mathfrak{k}(A)$.
As a special case of \cite[Lem. 2.7]{K2} we have
\begin{lem}\label{generatinglemma}
The elements $Y_1,\ldots,Y_n$ generate the Lie algebra $\mathfrak{k}(A)$.
\end{lem}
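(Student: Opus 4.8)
The plan is to show that the Lie subalgebra $\mathfrak{k}'(A) \subseteq \mathfrak{k}(A)$ generated by $Y_1,\ldots,Y_n$ already exhausts each graded piece $(\mathfrak{g}_\alpha(A)\oplus\mathfrak{g}_{-\alpha}(A))^\omega$, proceeding by induction on the height of $\alpha \in \Phi_+$. The base case $\mathrm{ht}(\alpha)=1$ is exactly the statement that $Y_i = e_i - f_i \in \mathfrak{k}'(A)$ for each simple root $\alpha_i$, which holds by definition. For the inductive step, suppose $\alpha \in \Phi_+$ has height $k \geq 2$ and that $(\mathfrak{g}_\beta(A)\oplus\mathfrak{g}_{-\beta}(A))^\omega \subseteq \mathfrak{k}'(A)$ for all $\beta \in \Phi_+$ with $\mathrm{ht}(\beta) < k$.

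The key input is that $\mathfrak{n}(A) = \bigoplus_{\beta\in\Phi_+}\mathfrak{g}_\beta(A)$ is generated by the $e_i$, so $\mathfrak{g}_\alpha(A) = \sum_i [e_i, \mathfrak{g}_{\alpha-\alpha_i}(A)]$, where the sum runs over those $i$ with $\alpha-\alpha_i \in \Phi_+$ (necessarily of height $k-1$). Thus any $e_\alpha \in \mathfrak{g}_\alpha(A)$ can be written as a sum of brackets $[e_i, e_{\alpha-\alpha_i}]$ with $e_{\alpha-\alpha_i}\in\mathfrak{g}_{\alpha-\alpha_i}(A)$. Now I would compute, using $\omega$-equivariance and the triangular decomposition, the bracket $[Y_i, y]$ for $y = e_{\alpha-\alpha_i} + \omega(e_{\alpha-\alpha_i}) \in (\mathfrak{g}_{\alpha-\alpha_i}(A)\oplus\mathfrak{g}_{-(\alpha-\alpha_i)}(A))^\omega \subseteq \mathfrak{k}'(A)$. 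Expanding,
\[
[Y_i, y] = [e_i - f_i,\, e_{\alpha-\alpha_i} + \omega(e_{\alpha-\alpha_i})],
\]
the four terms land in degrees $\alpha$, $\alpha-2\alpha_i$, $2\alpha_i - \alpha$, and $-\alpha$ respectively. The degree-$\alpha$ component is $[e_i, e_{\alpha-\alpha_i}]$ and the degree-$(-\alpha)$ component is its $\omega$-image, so the "outer" part of $[Y_i,y]$ is exactly the element $[e_i,e_{\alpha-\alpha_i}] + \omega([e_i,e_{\alpha-\alpha_i}]) \in (\mathfrak{g}_\alpha(A)\oplus\mathfrak{g}_{-\alpha}(A))^\omega$ that we want, while the "inner" part $[f_i, e_{\alpha-\alpha_i}] + \omega([f_i,e_{\alpha-\alpha_i}])$ lives in $(\mathfrak{g}_{\alpha-2\alpha_i}(A)\oplus\mathfrak{g}_{2\alpha_i-\alpha}(A))^\omega$, a sum of root spaces of strictly smaller height (in absolute value) than $k$ — or is zero, or lies in $\mathfrak{h}(A)$ if $\alpha=2\alpha_i$, but $2\alpha_i$ is never a root, so in fact $\alpha-2\alpha_i$ has height $k-2 < k$. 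By the induction hypothesis that inner part already lies in $\mathfrak{k}'(A)$, hence so does the outer part. Summing over $i$ recovers all of $(\mathfrak{g}_\alpha(A)\oplus\mathfrak{g}_{-\alpha}(A))^\omega$, completing the induction; together with $\mathfrak{k}(A)=\bigoplus_{\alpha\in\Phi_+}(\mathfrak{g}_\alpha(A)\oplus\mathfrak{g}_{-\alpha}(A))^\omega$ this gives $\mathfrak{k}'(A)=\mathfrak{k}(A)$.

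The main obstacle is bookkeeping the degrees correctly and handling the boundary case $\alpha - \alpha_i \notin \Phi_+$ (where the term simply does not contribute), as well as being careful that $[f_i, e_{\alpha-\alpha_i}]$ genuinely sits in a height-$(k-2)$ root space rather than straying into $\mathfrak{h}(A)$; this is guaranteed because $2\alpha_i\notin\Phi$, so no delicate case analysis at the Cartan subalgebra is needed. Since the lemma is quoted as a special case of \cite[Lem. 2.7]{K2}, one could alternatively just cite that; but the self-contained induction above is short and is the natural argument.
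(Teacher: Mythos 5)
Your proof is correct and follows essentially the same route as the paper: induction on $\textup{ht}(\alpha)$, using that $\mathfrak{n}(A)$ is generated by $e_1,\ldots,e_n$ and that the discrepancy between brackets of the $Y_i$'s and the corresponding brackets of the $e_i$'s lies in $\omega$-invariant components of strictly smaller height, which the induction hypothesis absorbs. The only (harmless) differences are that you peel off one bracket at a time via $\mathfrak{g}_\alpha(A)=\sum_i[e_i,\mathfrak{g}_{\alpha-\alpha_i}(A)]$, whereas the paper replaces the full iterated bracket expression for $e_\alpha^{(j)}$ by the bracket of $Y_i$'s in one step, and your ``inner part'' is actually $-\bigl([f_i,e_{\alpha-\alpha_i}]+\omega([f_i,e_{\alpha-\alpha_i}])\bigr)$, a sign slip that does not affect the argument.
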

\begin{proof}
For completeness we recall the proof.
Note that for $\alpha=\alpha_i\in\Pi$ a simple root we have $m(\alpha_i)=1$ and $y_{\alpha_i}^{(1)}$ is a
nonzero constant multiple of $Y_i$.
We now prove by induction to the height of $\alpha\in\Phi_+$  that $y_\alpha^{(j)}$
lies in the Lie subalgebra $\mathfrak{k}^\prime(A)$ of $\mathfrak{k}(A)$ generated by $Y_1,\ldots,Y_n$.

Suppose $\alpha\in\Phi_+$ has height $k>1$ and fix $1\leq j\leq m(\alpha)$. Since $\mathfrak{n}(A)$ is generated by $e_1,\ldots,e_n$, we can write
\[
e_\alpha^{(j)}=\sum_{\mathbf{i}}c_{\mathbf{i}}[e_{i_1},\ldots [e_{i_{k-2}},[e_{i_{k-1}},e_{i_k}]]\ldots]
\]
for certain constants $c_{\mathbf{i}}\in\mathbb{C}$, with the sum over
$k$-tuples $\mathbf{i}=(i_1,\ldots,i_k)$ such that $\alpha=\alpha_{i_1}+\cdots+\alpha_{i_k}$. Then
\[
y_\alpha^{(j)}-\widetilde{y}_\alpha^{(j)}\in\bigoplus_{\beta\in\Phi_+: \textup{ht}(\beta)<k}
(\mathfrak{g}_\beta(A)\oplus\mathfrak{g}_{-\beta}(A))^\omega
\]
for the element 
\[
\widetilde{y}_\alpha^{(j)}:=\sum_{\mathbf{i}}c_{\mathbf{i}}[Y_{i_1},\ldots [Y_{i_{k-2}},
[Y_{i_{k-1}},Y_{i_k}]]\ldots]\in\mathfrak{k}^\prime(A).
\]
By the induction hypothesis we conclude that 
$y_\alpha^{(j)}-\widetilde{y}_\alpha^{(j)}\in\mathfrak{k}^\prime(A)$, hence
$y_\alpha^{(j)}\in\mathfrak{k}^\prime(A)$.
\end{proof}
The defining relations of $\mathfrak{k}(A)$ in terms of the generators $Y_1,\ldots,Y_n$ take the form of inhomogeneous Serre relations involving the following integers $c_{s}^{ij}[r]$ for $1\leq i\not=j\leq n$ and $r\geq s\geq 0$.

\begin{defi} 
Let $1\leq i\not=j\leq n$. 
Set $c_r^{ij}[r]:=1$ ($r\geq 0$) and $c_{r-1}^{ij}[r]:=0$ ($r\geq 1$). For $r\geq 2$, define $c_s^{ij}[r]$ for 
$r\geq s\geq 0$
recursively by
\begin{equation}\label{recursion}
c_s^{ij}[r]=c_{s-1}^{ij}[r-1]-(r-1)(r-2+a_{ij})c_s^{ij}[r-2],
\end{equation} 
with the convention that $c_{-1}^{ij}[r]:=0$. 
\end{defi}
Note that $c_s^{ij}[r]$ only depends on the matrix coefficient $a_{ij}$ of the generalized Cartan matrix $A$.
Note furthermore that $c_0^{ij}[2\ell+1]=0$ ($\ell\in\mathbb{Z}_{\geq 0}$) and
\begin{equation}\label{even}
c_0^{ij}[2\ell]=(-1)^\ell\prod_{k=1}^{\ell}(2k-1)(2k-2+a_{ij}),\qquad \ell\in\mathbb{Z}_{\geq 0}.
\end{equation}
The list of integers $c_s^{ij}[r]$ for $0\leq s\leq r\leq 5$ is explicitly given by
\begin{equation}\label{integer5}
\begin{split}
c_0^{ij}[0]&=1,\\
(c_0^{ij}[1],c_1^{ij}[1])&=(0,1),\\
(c_0^{ij}[2], c_1^{ij}[2], c_2^{ij}[2])&=(-a_{ij},0,1),\\
(c_0^{ij}[3], c_1^{ij}[3], c_2^{ij}[3], c_3^{ij}[3])&=(0,-3a_{ij}-2,0,1),\\
(c_0^{ij}[4], c_1^{ij}[4], c_2^{ij}[4], c_3^{ij}[4], c_4^{ij}[4])&=(3a_{ij}^2+6a_{ij},0,-6a_{ij}-8,0,1),\\
(c_0^{ij}[5], c_1^{ij}[5], c_2^{ij}[5], c_3^{ij}[5], c_4^{ij}[5], c_5^{ij}[5])&=(0,15a_{ij}^2+50a_{ij}+24,0,-10a_{ij}-20,0,1).
\end{split}
\end{equation}
\begin{prop}
In $\mathfrak{g}(A)$ we have
\begin{equation}\label{almostrelation}
\sum_{s=0}^rc_s^{ij}[r]\,(\textup{ad}\,Y_i)^sY_j=
(\textup{ad}\,e_i)^re_j+(-1)^{r-1}(\textup{ad}\,f_i)^rf_j,\qquad 1\leq i\not=j\leq n
\end{equation}
for $r\geq 0$.
\end{prop}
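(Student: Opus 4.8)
The plan is to prove \eqref{almostrelation} by induction on $r$, with the recursion \eqref{recursion} built directly into the inductive step. Fix $i\neq j$ and write $Y:=\textup{ad}\,Y_i$, $E:=\textup{ad}\,e_i$, $F:=\textup{ad}\,f_i$, so $Y=E-F$ since $Y_i=e_i-f_i$. The cases $r=0$ and $r=1$ are direct: for $r=0$ both sides equal $e_j-f_j$, and for $r=1$ the left side is $[Y_i,Y_j]$, which equals $[e_i,e_j]+[f_i,f_j]$ because $[e_i,f_j]=\delta_{i,j}h_i=0=[f_i,e_j]$, matching the right side. For $r\geq 2$ I would apply $Y=E-F$ to identity \eqref{almostrelation} for $r-1$. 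The left side then becomes $\sum_{s} c_s^{ij}[r-1]\,Y^{s+1}Y_j=\sum_{t\geq 1}c_{t-1}^{ij}[r-1]\,Y^{t}Y_j$, which is precisely the shifted sum occurring in \eqref{recursion}; so the whole point is to identify the right side of $(E-F)\big((\textup{ad}\,e_i)^{r-1}e_j+(-1)^{r-2}(\textup{ad}\,f_i)^{r-1}f_j\big)$ with the correct combination of $r$-th and $(r-2)$-th terms.

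The one non-bookkeeping ingredient is how the ``wrong'' generator acts on a string of brackets. Using $[e_i,f_i]=h_i$, the weight identity $[h_i,(\textup{ad}\,e_i)^m e_j]=(2m+a_{ij})(\textup{ad}\,e_i)^m e_j$ (from $\alpha_i(h_i)=2$ and $\alpha_j(h_i)=a_{ij}$), and $[f_i,e_j]=0$, a short induction on $m$ gives
\[
(\textup{ad}\,f_i)(\textup{ad}\,e_i)^{m}e_j=-m(m-1+a_{ij})\,(\textup{ad}\,e_i)^{m-1}e_j,\qquad m\geq 1,
\]
and applying the Chevalley involution $\omega$ (which swaps $e_i\leftrightarrow -f_i$, $e_j\leftrightarrow -f_j$) yields the mirror identity $(\textup{ad}\,e_i)(\textup{ad}\,f_i)^{m}f_j=-m(m-1+a_{ij})(\textup{ad}\,f_i)^{m-1}f_j$. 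Substituting these into $(E-F)\big((\textup{ad}\,e_i)^{r-1}e_j+(-1)^{r-2}(\textup{ad}\,f_i)^{r-1}f_j\big)$, the $E$- and $F$-parts produce the ``top'' terms $(\textup{ad}\,e_i)^{r}e_j+(-1)^{r-1}(\textup{ad}\,f_i)^{r}f_j$, while the cross terms assemble into $(r-1)(r-2+a_{ij})$ times $(\textup{ad}\,e_i)^{r-2}e_j+(-1)^{r-3}(\textup{ad}\,f_i)^{r-2}f_j$, which is exactly the right side of \eqref{almostrelation} for $r-2$.

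Invoking the induction hypothesis for $r-2$ to rewrite that last bracket as $\sum_{s}c_s^{ij}[r-2]\,Y^{s}Y_j$ and rearranging gives
\[
(\textup{ad}\,e_i)^{r}e_j+(-1)^{r-1}(\textup{ad}\,f_i)^{r}f_j=\sum_{t\geq 1}c_{t-1}^{ij}[r-1]\,Y^{t}Y_j-(r-1)(r-2+a_{ij})\sum_{s}c_s^{ij}[r-2]\,Y^{s}Y_j,
\]
and comparing the coefficient of $Y^sY_j$ on the right with \eqref{recursion} (together with the stipulated boundary values $c_r^{ij}[r]=1$, $c_{r-1}^{ij}[r]=0$ and the convention $c_{-1}^{ij}[\cdot]=0$) closes the induction. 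I expect the only delicate part to be this last sign-and-boundary accounting: carrying the $(-1)^{r-1}$ prefactor correctly through the $\omega$-mirror identity, and verifying that the extreme coefficients $s=r$, $s=r-1$, $s=0$ come out to $1$, $0$, and $-(r-1)(r-2+a_{ij})c_0^{ij}[r-2]$ respectively, in agreement with the recursion. The $\mathfrak{sl}_2$-type computation underlying the cross-term identity is routine.
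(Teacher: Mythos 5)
Your proposal is correct and follows essentially the same route as the paper: the same auxiliary identity $(\textup{ad}\,f_i)(\textup{ad}\,e_i)^{m}e_j=-m(m-1+a_{ij})(\textup{ad}\,e_i)^{m-1}e_j$ (proved by a short induction and mirrored via the Chevalley involution), followed by a two-step induction in which $\textup{ad}\,Y_i=\textup{ad}(e_i-f_i)$ is applied to the previous identity and the cross terms are absorbed using the induction hypothesis and the recursion \eqref{recursion}. Your sign and boundary bookkeeping checks out, so no changes are needed.
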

\begin{proof} 
Fix $i\not=j$.
Induction to $r$ shows that 
\[
(\textup{ad}\,e_i)(\textup{ad}f_i)^rf_j=-r(r-1+a_{ij})(\textup{ad}\,f_i)^{r-1}f_j
\]
for $r\geq 0$, where the right hand side is read as zero for $r=0$. Applying the Chevalley involution $\omega$ shows that the formula with $e_i\leftrightarrow f_i$ and $e_j\leftrightarrow f_j$ also holds true. 

We now proceed to prove \eqref{almostrelation} by induction to $r$, the statement being trivial for $r=0$ and $r=1$. If \eqref{almostrelation} holds true up to and including $r\in\mathbb{Z}_{>0}$ then acting by $\textup{ad}\,Y_i=\textup{ad}(e_i-f_i)$ on both sides of \eqref{almostrelation} gives, in view of the previous paragraph,
\begin{equation*}
\begin{split}
\sum_{s=0}^{r+1}c_{s-1}^{ij}[r](\textup{ad}\,Y_i)^sY_j&=
(\textup{ad}\,e_i)^{r+1}e_j+(-1)^{r}(\textup{ad}\,f_i)^{r+1}f_j\\
&+r(r-1+a_{ij})\bigl((\textup{ad}\,e_i)^{r-1}e_j+(-1)^{r-2}(\textup{ad}\,f_i)^{r-1}f_j\bigr).
\end{split}
\end{equation*}
Applying the induction hypothesis to the second line 
and moving the resulting term to the other side of the equation
establishes the induction step, since
\[
c_s^{ij}[r+1]=c_{s-1}^{ij}[r]-r(r-1+a_{ij})c_s^{ij}[r-1],\qquad 0\leq s<r,
\]
$c_r^{ij}[r+1]=0=c_{r-1}^{ij}[r]$ and $c_{r+1}^{ij}[r+1]=1=c_r^{ij}[r]$.
\end{proof}
For $r=1-a_{ij}$ the identity \eqref{almostrelation} reduces to
\begin{equation}\label{DGrelationsg(A)}
\sum_{s=0}^{1-a_{ij}}c_s^{ij}[1-a_{ij}](\textup{ad}\,Y_i)^sY_j=0,\qquad
1\leq i\not=j\leq n
\end{equation}
due to the Serre relations \eqref{SerreRelations} in $\mathfrak{g}(A)$. We will show that these
are the defining relations of $\mathfrak{k}(A)$ in terms of the generators $y_1,\ldots,y_n$ of $\mathfrak{k}(A)$. In other words, we will show that $\mathfrak{k}(A)$ is isomorphic to 
the {\it generalized Onsager algebra}, which we define as follows:
\begin{defi}[Generalized Onsager algebra]\label{DefgenOns}
Let $A=(a_{ij})_{i,j=1}^n$ be a symmetrizable generalized Cartan matrix. The generalized Onsager algebra
$\mathcal{L}(A)$ is the complex Lie algebra with generators $B_1,\ldots,B_n$ and defining relations
the inhomogeneous Serre relations
\begin{equation}\label{DGrelations}
\sum_{s=0}^{1-a_{ij}}c_s^{ij}[1-a_{ij}]\,(\textup{ad}\,B_i)^sB_j=0\qquad 1\leq i\not=j\leq n.
\end{equation}
\end{defi}
By \eqref{integer5}, the inhomogeneous Serre relations \eqref{DGrelations}
for $a_{ij}\geq -4$ are given by the following concrete list:
\begin{equation}\label{integer5two}
\begin{split}
[B_i,B_j]&=0,\qquad\qquad\qquad\qquad\qquad\quad\,\,\qquad\quad\,\,\,\, \hbox{ if } a_{ij}=0,\\
[B_i,[B_i,B_j]]&=-B_j,\qquad\qquad\qquad\qquad\qquad\qquad\quad\,\,\,\,\hbox{ if } a_{ij}=-1,\\
[B_i,[B_i,[B_i,B_j]]]&=-4[B_i,B_j],\qquad\quad\qquad\qquad\,\,\,\qquad\quad\,\,\, \hbox{ if } a_{ij}=-2,\\
[B_i,[B_i,[B_i,[B_i,B_j]]]]&=-10[B_i,[B_i,B_j]]-9B_j,\qquad\qquad\quad\,\,\,\, \hbox{ if } a_{ij}=-3,\\
[B_i,[B_i,[B_i,[B_i,[B_i,B_j]]]]]&=-20[B_i,[B_i,[B_i,B_j]]]-64[B_i,B_j], \quad \hbox{ if } a_{ij}=-4.
\end{split}
\end{equation}
\begin{rema}\label{DGrema}
For the generalized Cartan matrix
\begin{equation}\label{typeA11}
A=\left(\begin{matrix} 2 & -2\\ -2 & 2\end{matrix}\right)
\end{equation}
of affine type $A_1^{(1)}$, $\mathcal{L}(A)$ is the Onsager \cite{O} algebra and the inhomogeneous Serre relations \eqref{DGrelations} are the Dolan-Grady \cite{DG} relations. 
For $A$ of 
affine type $A_n^{(1)}$ (respectively $D_n^{(1)}$), the generalized Onsager algebra was introduced by Uglov and Ivanov \cite{UI} (respectively Date and Usami \cite{DU}).  For $A$ of arbitrary affine type, generalized Onsager algebras and their quantum analogs have been introduced by Baseilhac and Belliard \cite{BB}. 
\end{rema}

We turn $\mathcal{L}(A)$ into a filtered Lie algebra with filtration $\mathcal{L}(A)=\bigcup_{j=1}^{\infty}\mathcal{L}_j(A)$ given by
\[
\mathcal{L}_j(A):=\textup{span}\{[B_{i_1},\ldots [B_{i_{m-2}},[B_{i_{m-1}},
B_{i_m}]]\ldots] \,\, | \,\, 1\leq i_s\leq n \,\, \& \,\, m\leq j\}.
\]
Write
\[
\textup{Gr}(\mathcal{L}(A))=\bigoplus_{j=1}^{\infty}\textup{Gr}_j(\mathcal{L}(A))
\] 
for the associated graded Lie algebra, with $\textup{Gr}_j(\mathcal{L}(A)):=
\mathcal{L}_j(A)/\mathcal{L}_{j-1}(A)$ and $\mathcal{L}_{0}(A):=\{0\}$. Elements
in $\textup{Gr}_j(\mathcal{L}(A))$  will be denoted by
\[
[x]_j:=x+\mathcal{L}_{j-1}(A)\in\textup{Gr}_j(\mathcal{L}(A)),\qquad x\in\mathcal{L}_j(A).
\]
Note that the inhomogeneous Serre relations
\eqref{DGrelations} in $\mathcal{L}(A)$ turn into the usual Serre relations
\begin{equation}\label{homDGrelations}
(\textup{ad} [B_i]_1)^{1-a_{ij}}[B_j]_1=0,\qquad i\not=j
\end{equation}
for the generators $\{[B_i]_1\}_{i=1}^n$ of the graded Lie algebra $\textup{Gr}(\mathcal{L}(A))$. 
\begin{thm}\label{mainTHM}
Let $A=(a_{ij})_{i,j=1}^n$ be a symmetrizable generalized Cartan matrix.\\
{\bf a.} The Lie algebra homomorphism
\begin{equation}\label{psi}
\psi: \mathcal{L}(A)\rightarrow \mathfrak{k}(A)
\end{equation}
defined by $\psi(B_i)=Y_i$ for $i=1,\ldots,n$, is an isomorphism.\\
{\bf b.} The graded Lie algebra homomorphism
\begin{equation}\label{varphi}
\varphi: \mathfrak{n}(A)\rightarrow\textup{Gr}(\mathcal{L}(A))
\end{equation}
defined by $\varphi(e_i)=[B_i]_1$ for $i=1,\ldots,n$, is an isomorphism.
\end{thm}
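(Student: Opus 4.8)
The plan is to prove parts \textbf{a} and \textbf{b} together, using the filtration on $\mathcal{L}(A)$ and the grading on $\mathfrak{n}(A)$ as a bridge. First I would observe that $\psi$ is well-defined: the inhomogeneous Serre relations \eqref{DGrelations} hold for $Y_1,\dots,Y_n$ in $\mathfrak{k}(A)$ by \eqref{DGrelationsg(A)}, which was derived from \eqref{almostrelation} and the Serre relations in $\mathfrak{g}(A)$. By Lemma \ref{generatinglemma} the image of $\psi$ is all of $\mathfrak{k}(A)$, so $\psi$ is surjective; the entire difficulty is injectivity. Similarly $\varphi$ is well-defined because \eqref{homDGrelations} shows $[B_1]_1,\dots,[B_n]_1$ satisfy the homogeneous Serre relations, and these are the defining relations of $\mathfrak{n}(A)$ by \cite{GK}; moreover $\varphi$ is surjective since $[B_1]_1,\dots,[B_n]_1$ generate $\textup{Gr}(\mathcal{L}(A))$ by construction of the filtration. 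So again the real content is injectivity of $\varphi$.

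Next I would set up the comparison map in the other direction. The key point is that $\psi$ is \emph{filtered}: $\psi(\mathcal{L}_j(A))\subseteq \mathfrak{k}(A)\cap\bigl(\bigoplus_{\textup{ht}(\alpha)\leq j}(\mathfrak{g}_\alpha(A)\oplus\mathfrak{g}_{-\alpha}(A))\bigr)$, because each iterated bracket $[Y_{i_1},\dots,Y_{i_m}]$ with $m\le j$ expands, modulo the lower-height triangular components introduced by the cross terms between $e$'s and $f$'s, into an iterated bracket of $e$'s of height $m$ together with its $\omega$-image. Concretely, if I filter $\mathfrak{k}(A)$ by $\mathfrak{F}_j:=\bigoplus_{\alpha\in\Phi_+,\ \textup{ht}(\alpha)\le j}(\mathfrak{g}_\alpha(A)\oplus\mathfrak{g}_{-\alpha}(A))^\omega$, then the argument already given in the proof of Lemma \ref{generatinglemma} (the relation $y_\alpha^{(j)}-\widetilde y_\alpha^{(j)}\in\bigoplus_{\textup{ht}(\beta)<k}(\dots)^\omega$) shows $\psi$ respects these filtrations. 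Passing to associated graded objects, the projection $\mathfrak{g}_\alpha(A)\oplus\mathfrak{g}_{-\alpha}(A)\to\mathfrak{g}_\alpha(A)$ restricted to $(\mathfrak{g}_\alpha(A)\oplus\mathfrak{g}_{-\alpha}(A))^\omega$ is a linear isomorphism, and assembling these gives a graded linear isomorphism $\textup{Gr}(\mathfrak{k}(A))\xrightarrow{\sim}\mathfrak{n}(A)$ which is in fact a Lie algebra isomorphism (the bracket on $\textup{Gr}(\mathfrak{k}(A))$ picks out exactly the top-height part of the bracket, which is the $\mathfrak{n}(A)$-bracket). Call this $\theta:\textup{Gr}(\mathfrak{k}(A))\xrightarrow{\sim}\mathfrak{n}(A)$, sending the class of $Y_i$ to $e_i$.

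Now the diagram closes: $\textup{Gr}(\psi):\textup{Gr}(\mathcal{L}(A))\to\textup{Gr}(\mathfrak{k}(A))$ composed with $\theta$ sends $[B_i]_1\mapsto e_i$, hence equals $\varphi^{-1}$ at the level of generators — more precisely $\varphi\circ\theta\circ\textup{Gr}(\psi)=\textup{id}$ on generators, so $\textup{Gr}(\psi)$ is injective (it has a left inverse). A standard filtered-algebra argument then upgrades this: if $0\neq x\in\mathcal{L}(A)$, pick the minimal $j$ with $x\in\mathcal{L}_j(A)$; then $[x]_j\neq 0$ in $\textup{Gr}_j(\mathcal{L}(A))$, so $\textup{Gr}(\psi)([x]_j)\neq 0$, which forces $\psi(x)\notin\mathfrak{F}_{j-1}$, in particular $\psi(x)\neq 0$. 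Thus $\psi$ is injective, proving \textbf{a}. For \textbf{b}, injectivity of $\varphi$ now follows because $\varphi$ is surjective and has been exhibited with the explicit inverse-on-generators map $\theta\circ\textup{Gr}(\psi)$, and since $\psi$ is an isomorphism $\textup{Gr}(\psi)$ is an isomorphism, so $\theta\circ\textup{Gr}(\psi)$ is an isomorphism with inverse $\varphi$; alternatively, $\dim\textup{Gr}_j(\mathcal{L}(A))=\dim\mathfrak{k}(A)\cap\mathfrak{F}_j-\dim\mathfrak{F}_{j-1}=\dim\mathfrak{n}_j(A)$ by part \textbf{a}, and a surjection between graded spaces of equal finite dimension in each degree is an isomorphism.

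The main obstacle is establishing that $\psi$ is filtered and that the induced map on associated graded is precisely the Lie algebra isomorphism $\textup{Gr}(\mathfrak{k}(A))\cong\mathfrak{n}(A)$ — i.e.\ checking carefully that the cross terms $[e_{i_1},\dots]$ vs.\ $[f_{i_1},\dots]$ genuinely land in strictly lower filtration and that nothing collapses unexpectedly in the top degree. This is morally the same computation as in the proof of Lemma \ref{generatinglemma}, but it must be made precise at the level of the filtration, and one must verify $\mathcal{L}_j(A)$ is a filtration with $[\mathcal{L}_j,\mathcal{L}_k]\subseteq\mathcal{L}_{j+k}$ (clear from the Jacobi identity and the definition) so that $\textup{Gr}(\mathcal{L}(A))$ is a well-defined graded Lie algebra, and that $\textup{Gr}(\psi)$ is a well-defined graded Lie algebra homomorphism.
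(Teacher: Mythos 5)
Your argument is correct, and at bottom it runs on the same fuel as the paper's proof: the height filtration, the fact that the leading (top-height) part of an iterated bracket of the $Y_i$'s is the corresponding bracket of the $e_i$'s, and the Gabber--Kac presentation of $\mathfrak{n}(A)$, which is what makes $\varphi$ well defined. The packaging, however, is genuinely different. The paper proves part {\bf a} by an element-wise induction: a kernel element $y\in\mathcal{L}_\ell(A)$ is written out in brackets of length $\leq\ell$, the $\mathfrak{g}(\ell)$-component of $\psi(y)$ along the height decomposition gives a vanishing bracket expression in $\mathfrak{n}_\ell(A)$, and applying $\varphi$ to that relation pushes $y$ down into $\mathcal{L}_{\ell-1}(A)$; part {\bf b} is then a separate linear-independence argument via the map $\pi_\ell$, after identifying $\mathcal{L}(A)$ with $\mathfrak{k}(A)$ through part {\bf a}. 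You instead make the graded Lie algebra isomorphism $\theta:\textup{Gr}(\mathfrak{k}(A))\xrightarrow{\sim}\mathfrak{n}(A)$ explicit and extract both parts from the single identity $\varphi\circ\theta\circ\textup{Gr}(\psi)=\textup{id}$, which holds because it is a Lie algebra endomorphism fixing the generators $[B_i]_1$. This buys a cleaner, more structural proof in which parts {\bf a} and {\bf b} come out simultaneously, at the cost of the bookkeeping you correctly flag: one must check that the height filtration $\mathfrak{F}_j$ on $\mathfrak{k}(A)$ satisfies $[\mathfrak{F}_j,\mathfrak{F}_k]\subseteq\mathfrak{F}_{j+k}$, that $\psi$ is filtered, and that $\theta$ is a Lie morphism (the cross terms between the $\mathfrak{g}_\alpha$- and $\mathfrak{g}_{-\alpha}$-components land in strictly lower height, and the possible Cartan component vanishes because $\mathfrak{k}(A)\cap\mathfrak{h}(A)=0$); all of this is indeed the computation already present in the proof of Lemma \ref{generatinglemma}. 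One step worth tightening: the assertion that $\psi$ being an isomorphism makes $\textup{Gr}(\psi)$ an isomorphism is not automatic for filtered maps. Here it does hold, either because injectivity of $\textup{Gr}(\psi)$ (which you have) combined with surjectivity of $\psi$ forces $\psi(\mathcal{L}_j(A))=\mathfrak{F}_j$ (an $x\in\mathcal{L}_m(A)\setminus\mathcal{L}_{m-1}(A)$ with $\psi(x)\in\mathfrak{F}_{m-1}$ would contradict injectivity of $\textup{Gr}_m(\psi)$), or more directly because the proof of Lemma \ref{generatinglemma} already shows $\mathfrak{F}_j\subseteq\psi(\mathcal{L}_j(A))+\mathfrak{F}_{j-1}$; your dimension-count alternative also needs this strictness, so it is better to cite one of these two justifications explicitly.
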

\begin{proof}
By the earlier remarks it is clear that $\psi$ and $\varphi$ are well defined
and surjective.\\
{\bf a.} 
We show that $\psi$ is injective. Let $y\in \textup{Ker}(\psi)\subseteq\mathcal{L}(A)$ and let $\ell\geq 1$ such that $y\in\mathcal{L}_\ell(A)$. Write
\begin{equation}\label{uptoell1}
y=\sum_{k=1}^{\ell}\Bigl(\sum_{i_1,\ldots,i_k=1}^nd_{i_1,\ldots,i_k}[B_{i_1},\ldots [B_{i_{k-2}}, [B_{i_{k-1}},
B_{i_k}]]\ldots]\Bigr)
\end{equation}
with coefficients $d_{i_1,\ldots,i_k}\in\mathbb{C}$. We prove that $y=0$ by induction to $\ell$.
If $\ell=1$ then $y=\sum_{i=1}^nd_iB_i$,
and $0=\psi(y)=\sum_{i=1}^nd_i(e_i-f_i)$ in $\mathfrak{k}(A)$ implies $d_i=0$ for all $i$. Suppose $\ell\geq 2$.
Applying $\psi: \mathcal{L}(A)\rightarrow\mathfrak{k}(A)$ to the identity \eqref{uptoell1} then yields 
\begin{equation}\label{halfway}
\sum_{k=1}^{\ell}\Bigl(\sum_{i_1,\ldots,i_k=1}^nd_{i_1,\ldots,i_k}[Y_{i_1},\ldots [Y_{i_{k-2}}, [Y_{i_{k-1}}, Y_{i_k}]]\ldots]\Bigr)=0
\end{equation}
in $\mathfrak{k}(A)\subset\mathfrak{g}(A)$. Consider the direct sum decomposition
\begin{equation}\label{along}
\mathfrak{g}(A):=\bigoplus_{k\in\mathbb{Z}}\mathfrak{g}(k)
\end{equation}
where $\mathfrak{g}(0):=\mathfrak{h}(A)$ and, for $k\in\mathbb{Z}_{>0}$, $\mathfrak{g}(k):=\bigoplus_{\alpha\in\Phi_+: \textup{ht}(\alpha)=k}\mathfrak{g}_\alpha(A)$ and $\mathfrak{g}(-k):=
\omega(\mathfrak{g}(k))$. Choosing the $\mathfrak{g}(\ell)$-component of the identity \eqref{halfway} along the direct sum decomposition \eqref{along}
we obtain
\begin{equation}\label{ell1}
\sum_{i_1,\ldots,i_\ell=1}^nd_{i_1,\ldots,i_\ell}[e_{i_1},\ldots [e_{i_{\ell-2}}, [e_{i_{\ell-1}},
e_{i_\ell}]]\ldots]=0
\end{equation}
in $\mathfrak{n}_\ell(A)$. Applying $\varphi: \mathfrak{n}(A)\rightarrow\textup{Gr}(\mathcal{L}(A))$ to  \eqref{ell1} then implies that
\[
\sum_{i_1,\ldots,i_\ell=1}^nd_{i_1,\ldots,i_\ell}[B_{i_1},\ldots [B_{i_{\ell-2}}, [B_{i_{\ell-1}},
B_{i_\ell}]]\ldots]\in \mathcal{L}_{\ell-1}(A).
\]
Returning to \eqref{uptoell1} we conclude that $y\in\mathcal{L}_{\ell-1}(A)$, hence $y=0$ by the
induction hypothesis. This show that $\psi: \mathcal{L}(A)\rightarrow\mathfrak{k}(A)$ is an isomorphism of Lie algebras.\\
{\bf b.} We show that the surjective graded Lie algebra homomorphism $\varphi: \mathfrak{n}(A)\rightarrow \textup{Gr}(\mathcal{L}(A))$ is injective. Fix $\ell\geq 1$. It suffices to show that
the set 
\[
\mathcal{S}_\ell:=\bigcup_{\alpha\in\Phi_+:\, \textup{ht}(\alpha)=\ell}
\{\varphi(e_\alpha^{(j)}) \,\, | \,\, 
1\leq j\leq m(\alpha)\}
\]
is linear independent in $\textup{Gr}_\ell(\mathcal{L}(A))=\mathcal{L}_\ell(A)/\mathcal{L}_{\ell-1}(A)$.

Identify $\mathcal{L}(A)\simeq\mathfrak{k}(A)$
using the Lie algebra isomorphism $\psi$. Then we have  
\[
\mathcal{L}_k(A)\subseteq \mathfrak{g}_{\leq k}:=\bigoplus_{i=-\infty}^k\mathfrak{g}(i)
\]
for $k\geq 1$. Hence we have a well defined linear map $\pi_\ell: \textup{Gr}_\ell(\mathcal{L}(A))\rightarrow \mathfrak{g}_{\leq \ell}/\mathfrak{g}_{\leq \ell-1}$ defined by
$\pi_\ell([x]_\ell):=x+\mathfrak{g}_{\leq \ell-1}$.
For $\alpha\in\Phi_+$ with $\textup{ht}(\alpha)=\ell$ and $j\in\{1,\ldots,m(\alpha)\}$ one shows, in a similar manner as in the proof of part {\bf a}, that
\[
\pi_\ell(\varphi(e_\alpha^{(j)}))=e_\alpha^{(j)}+\mathfrak{g}_{\leq \ell-1}.
\]
Consequently
\[\pi_\ell(\mathcal{S}_\ell)=\bigcup_{\alpha\in\Phi_+: \textup{ht}(\alpha)=\ell}\{e_\alpha^{(j)} +\mathfrak{g}_{\leq \ell-1}\},
\]
which is a linear independent set in $\mathfrak{g}_{\leq\ell}/\mathfrak{g}_{\leq \ell-1}$. We conclude that $\mathcal{S}_\ell$ is a linear independent set in $\textup{Gr}_{\ell}(\mathcal{L})$. This completes the proof of {\bf b}.
\end{proof}
In view of Remark \ref{DGrema} and the previous theorem, we introduce the following terminology.
\begin{defi}
We call $\mathcal{L}(A)$ the {\it Dolan-Grady type} presentation of the fix-point Lie subalgebra $\mathfrak{k}(A)$.
\end{defi}

We end this section by describing the 
space $\textup{ch}(\mathfrak{k}(A))$ of one-dimensional representations
of $\mathfrak{k}(A)$.
Note that as vector spaces, 
\[
\textup{ch}(\mathfrak{k}(A))\simeq\bigl(\mathfrak{k}(A)/[\mathfrak{k}(A),\mathfrak{k}(A)]\bigr)^*.
\]

Define
\[
\mathcal{E}_A:=\{j\in\{1,\ldots,n\} \,\,\,\, | \,\,\,\, a_{ij}\equiv 0 \,\,\ (\textup{mod } 2)\,\,\,\,\forall\, i\in\{1,\ldots,n\}\}.
\]
\begin{prop}\label{onedimrep}
We have a linear isomorphism
\[
\mathbb{C}^{\mathcal{E}_A}\overset{\sim}{\longrightarrow}\textup{ch}(\mathfrak{k}(A)),
\qquad \mathbf{t}\mapsto \chi_{\mathbf{t}},
\]
with $\chi_{\mathbf{t}}\in\textup{ch}(\mathfrak{k}(A))$ ($\mathbf{t}
=(t_j)_{j\in\mathcal{E}_A}\in\mathbb{C}^{\mathcal{E}_A}$) defined by 
\begin{equation*}
\chi_{\mathbf{t}}(Y_j)=
\begin{cases}
t_j\quad &\hbox{ if }\,\, j\in\mathcal{E}_A,\\
0\quad &\hbox{ if }\,\, j\not\in\mathcal{E}_A.
\end{cases}
\end{equation*}
\end{prop}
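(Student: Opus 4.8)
The plan is to use the isomorphism $\psi$ of Theorem \ref{mainTHM} to transport the question to the generalized Onsager algebra $\mathcal{L}(A)$, where the presentation by generators and relations makes the characters transparent. A one-dimensional representation of $\mathfrak{k}(A)$ is the same as a Lie algebra homomorphism $\mathfrak{k}(A)\to\mathbb{C}$ into the abelian Lie algebra $\mathbb{C}$, and precomposition with $\psi$ identifies these with Lie algebra homomorphisms $\mathcal{L}(A)\to\mathbb{C}$. Since $\mathcal{L}(A)$ is generated by $B_1,\dots,B_n$, such a homomorphism $\chi$ is determined by $\mathbf{t}=(\chi(B_i))_{i=1}^n\in\mathbb{C}^n$; conversely, $\mathcal{L}(A)$ being the quotient of the free Lie algebra on $B_1,\dots,B_n$ by the ideal generated by the left-hand sides of \eqref{DGrelations}, a tuple $\mathbf{t}\in\mathbb{C}^n$ extends to a homomorphism precisely when $\mathbf{t}$ annihilates those left-hand sides. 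A homomorphism into an abelian Lie algebra kills every bracket, so only the $s=0$ term of \eqref{DGrelations} survives, and the conditions on $\mathbf{t}$ reduce to
\[
c_0^{ij}[1-a_{ij}]\,t_j=0,\qquad 1\le i\ne j\le n.
\]

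Next I would determine for which $i\ne j$ the constant $c_0^{ij}[1-a_{ij}]$ is nonzero. By the remark preceding \eqref{even}, $c_0^{ij}[2\ell+1]=0$, so the condition above is vacuous when $1-a_{ij}$ is odd, i.e. when $a_{ij}$ is even. When $a_{ij}$ is odd we have $a_{ij}\le -1$ since $i\ne j$, so $1-a_{ij}=2\ell$ with $\ell\ge 1$, and \eqref{even} gives $c_0^{ij}[2\ell]=(-1)^\ell\prod_{k=1}^{\ell}(2k-1)(2k-2+a_{ij})$. Writing $a_{ij}=1-2\ell$, the factor $2k-2+a_{ij}$ equals $2(k-\ell)-1$, so as $k$ runs from $1$ to $\ell$ the two families of factors $2k-1$ and $2k-2+a_{ij}$ run over the odd integers $1,3,\dots,2\ell-1$ and $1-2\ell,3-2\ell,\dots,-1$ respectively; all of these are nonzero, hence $c_0^{ij}[1-a_{ij}]\ne 0$. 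Thus the relation \eqref{DGrelations} with this $i,j$ forces $t_j=0$.

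Combining the two cases, $\mathbf{t}\in\mathbb{C}^n$ extends to a character of $\mathcal{L}(A)$ if and only if $t_j=0$ whenever there exists an index $i$ with $a_{ij}$ odd, i.e. if and only if $t_j=0$ for all $j\notin\mathcal{E}_A$ (the diagonal entry $a_{jj}=2$ imposes nothing). Transporting back via $\psi$ and recalling $Y_j=\psi(B_j)$, this says exactly that the characters of $\mathfrak{k}(A)$ are the $\chi_{\mathbf{t}}$ with $\mathbf{t}\in\mathbb{C}^{\mathcal{E}_A}$; since $\chi_{\mathbf{t}}$ is determined by its values on the generators $Y_1,\dots,Y_n$ the assignment $\mathbf{t}\mapsto\chi_{\mathbf{t}}$ is injective, and linearity is clear from the formula. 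The only step requiring genuine computation — and hence the main obstacle — is the non-vanishing of $c_0^{ij}[1-a_{ij}]$ for $a_{ij}$ odd, but as shown this drops out immediately from the closed formula \eqref{even}.
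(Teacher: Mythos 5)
Your argument is correct and follows essentially the same route as the paper's proof: pass to $\mathcal{L}(A)$ via $\psi$, observe that for a character only the $s=0$ term of the inhomogeneous Serre relations \eqref{DGrelations} survives, and use the parity dichotomy $c_0^{ij}[1-a_{ij}]=0$ for $a_{ij}$ even versus $c_0^{ij}[1-a_{ij}]\neq 0$ for $a_{ij}$ odd (via \eqref{even}) to see that $t_j$ is free exactly when $j\in\mathcal{E}_A$. Your explicit factor-by-factor verification of the non-vanishing is just a slightly more detailed version of the paper's appeal to \eqref{even}.
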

\begin{proof}
Fix $j\in\{1,\ldots,n\}$.
If $i\not=j$ then $c_0^{ij}[1-a_{ij}]=0$ if $a_{ij}$ is even, and \eqref{even} implies that $c_0^{ij}[1-a_{ij}]\not=0$ if $a_{ij}$ is odd. By the defining inhomogeneous Serre relations \eqref{DGrelations} for $\mathcal{L}(A)$, a one-dimensional representation $\chi\in\textup{ch}(\mathcal{L}(A))$
can thus take any value at $B_j$ if $j\in\mathcal{E}_A$, while it must map $B_j$ to zero if
$j\not\in\mathcal{E}_A$.
The result now immediately follows from Theorem \ref{mainTHM}\,{\bf a}.
\end{proof}

\section{Onsager type presentation in the finite and untwisted affine case}\label{section3}

In this section we describe some additional properties of $\mathfrak{k}(A)$ in case the symmetrizable generalized Cartan matrix $A$ is of finite or untwisted affine type.
In these two cases we discuss integral forms of $\mathfrak{k}(A)$ and explicitly give the
structure constants of $\mathfrak{k}(A)$ with respect to a suitable integral basis of $\mathfrak{k}(A)$.
We show that this leads to Onsager type presentations of $\mathfrak{k}(A)$ if $A$ is of untwisted affine type.
.
\subsection{The finite case}
Let $A$ be an indecomposable generalized Cartan matrix of finite type (which is automatically symmetrizable by \cite[Lem. 4.6]{Ka}). By
Serre's Theorem \cite[Thm. 18.2]{Hu} the corresponding Kac-Moody Lie algebra $\mathfrak{g}(A)$ is a complex simple Lie algebra and the pair
$(\mathfrak{g}(A),\mathfrak{k}(A))$ is the complexification of an irreducible real split symmetric pair. Let us discuss this case now from this perspective.

Let $\mathfrak{g}$ be a simple Lie algebra over $\mathbb{C}$ of rank $r$  and fix a Cartan subalgebra 
$\mathfrak{h}\subset\mathfrak{g}$.
Write $\Phi\subset\mathfrak{h}^\ast$ for its root system, $\Phi_+$ for a choice of positive roots,
and $\Pi:=\{\alpha_1,\ldots,\alpha_r\}$ for the corresponding simple roots. 
Let $(\cdot,\cdot)$ be a nondegenerate invariant symmetric bilinear form on $\mathfrak{g}$. It is unique up to a nonzero scalar multiple. It is non-degenerate when restricted to $\mathfrak{h}\times\mathfrak{h}$. For $\lambda\in\mathfrak{h}^*$ let $t_\lambda\in\mathfrak{h}$ be the Cartan element such that $(t_\lambda,h)=\lambda(h)$ for all 
$h\in\mathfrak{h}$. Write $(\lambda,\mu):=\mu(t_\lambda)$ ($\lambda,\mu\in\mathfrak{h}^*$) for the induced bilinear form on $\mathfrak{h}^*$. For later purposes (see Subsection \ref{affineSection}) it is convenient to normalize the form $(\cdot,\cdot)$ on $\mathfrak{g}$ such that $(\alpha,\alpha)=2$ for long roots $\alpha$. Define 
\[
h_\alpha:=\frac{2t_\alpha}{(\alpha,\alpha)},\qquad \alpha\in\Phi,
\]
and write $h_i:=h_{\alpha_i}$
for $i=1,\ldots,r$.
Set $\Pi^\vee:=\{h_i\}_{i=1}^r\subset\mathfrak{h}$. Then 
$A=(a_{ij})_{i,j=1}^r:=(\alpha_j(h_i))_{i,j=1}^r$ is the Cartan matrix of $\mathfrak{g}$,
and $(\mathfrak{h},\Pi,\Pi^\vee)$ is a realization of $A$.

Fix $e_i\in\mathfrak{g}_{\alpha_i}$ and $f_i\in\mathfrak{g}_{-\alpha_i}$ ($1\leq i\leq r$) such that
$[e_i,f_i]=h_i$. Then Serre's Theorem \cite[Thm. 18.2]{Hu} shows that
$\mathfrak{g}\simeq\mathfrak{g}(A)$ by identifying $e_1,\ldots,e_r,f_1,\ldots,f_r$ with the Chevalley
generators of $\mathfrak{g}(A)$.
We will freely use the resulting notations and results on $\mathfrak{g}(A)$ from the previous section,
only dropping the dependence on $A$.
In particular, we write $\mathfrak{g}_\alpha$ for $\mathfrak{g}_\alpha(A)$, $\mathfrak{k}$ for
$\mathfrak{k}(A)$, etc. Note that the natural number $n$ from the previous section equals the rank $r$ of $\mathfrak{g}$. 

Recall that the Chevalley involution $\omega$ is given by 
$\omega|_{\mathfrak{h}}=-\textup{Id}_{\mathfrak{h}}$ and $\omega(e_i)=-f_i$ for $i=1,\ldots,r$. The corresponding generators $Y_i$ ($1\leq i\leq r$) of $\mathfrak{k}$ are $Y_i=e_i-f_i$ ($i=1,\ldots,r$).
Theorem \ref{mainTHM}\,{\bf a} leads to the explicit presentation of $\mathfrak{k}$ in terms of the
generators $Y_i$ ($i=1,\ldots,r$) by identifying $\mathfrak{k}$ with the generalized Onsager algebra $\mathcal{L}$ with respect to the Cartan matrix $A$.

\begin{eg}\label{example}
Let $\mathfrak{g}=\mathfrak{sl}_{r+1}(\mathbb{C})$ be the special linear Lie algebra and $E_{i,j}$ ($i,j=1,\ldots,r+1$) the standard matrix units in $\mathfrak{gl}_{r+1}(\mathbb{C})$. Take the diagonal matrices in $\mathfrak{g}$ as the Cartan subalgebra $\mathfrak{h}$ of $\mathfrak{g}$. Then
\[
e_i:=E_{i,i+1},\qquad f_i:=E_{i+1,i},\qquad i=1,\ldots,r
\]
are Chevalley generators of $\mathfrak{g}$.
One has
\[
\omega(X):=-X^T,\qquad X\in\mathfrak{sl}_{r+1}(\mathbb{C})
\] 
for the associated Chevalley involution, with $X^T$ is the transpose of the matrix $X$, and
the associated fix-point Lie subalgebra
\[
\mathfrak{k}=\{X\in\mathfrak{sl}_{r+1}(\mathbb{C}) \,\, | \,\, \omega(X)=X\}
\]
is the orthogonal Lie algebra $\mathfrak{so}_{r+1}(\mathbb{C})$.
\end{eg}

We extend the subset $\{h_i,e_i,f_i\}_{i=1}^r$ to a 
Chevalley basis $\{h_i,e_\alpha\}_{1\leq i\leq r, \alpha\in\Phi}$ of $\mathfrak{g}$ as follows.
We set
$e_{\alpha_i}:=e_i$ and $e_{-\alpha_i}:=f_i$ for $i=1,\ldots,r$ and choose for the remaining roots $\alpha$ root vectors
$e_\alpha\in\mathfrak{g}_\alpha$ such that $[e_\alpha,e_{-\alpha}]=h_\alpha$ and $\omega(e_\alpha)=-e_{-\alpha}$ (see, e.g., \cite[\S 25.2]{Hu} for a detailed discussion on
the existence of Chevalley bases of $\mathfrak{g}$). 
Then
\[
\mathfrak{g}_{\mathbb{Z}}:=\textup{span}_{\mathbb{Z}}\{e_\alpha,h_i\,\,\, | \,\,\, 
\alpha\in\Phi,\,\, 1\leq i\leq r\}.
\]
is an integral form of $\mathfrak{g}$, in the sense that the Lie bracket $\lbrack\cdot,\cdot\rbrack$ of $\mathfrak{g}$ restricts to a 
Lie bracket $\lbrack\cdot,\cdot\rbrack: \mathfrak{g}_{\mathbb{Z}}\times\mathfrak{g}_{\mathbb{Z}}
\rightarrow\mathfrak{g}_{\mathbb{Z}}$ on $\mathfrak{g}_{\mathbb{Z}}$ and 
$\mathfrak{g}\simeq \mathbb{C}\otimes_{\mathbb{Z}}\mathfrak{g}_{\mathbb{Z}}$.
Note that the Chevalley involution $\omega$ restricts to an involution on $\mathfrak{g}_{\mathbb{Z}}$.

If $\alpha,\beta\in\Phi$ and $\alpha+\beta\in\Phi$ then $[e_\alpha,e_\beta]=\kappa_{\alpha,\beta}e_{\alpha+\beta}$ with coefficients $\kappa_{\alpha,\beta}\in\mathbb{Z}\setminus\{0\}$ satisfying
$\kappa_{\alpha,\beta}=
-\kappa_{-\alpha,-\beta}$. The integers $\kappa_{\alpha,\beta}$ are easily computable up to sign, see 
\cite[\S 25.2]{Hu}. Set $\kappa_{\alpha,\beta}:=0$ if $\alpha+\beta\not\in\Phi$. In what follows
we will simply write
\begin{equation}\label{calphabeta}
[e_\alpha,e_\beta]=\kappa_{\alpha,\beta}e_{\alpha+\beta}\qquad (\alpha,\beta\in\Phi: \,\, 
\alpha+\beta\not=0)
\end{equation}
as identities in $\mathfrak{g}$, which should be read as $[e_\alpha,e_\beta]=0$ if $\alpha+\beta\not\in\Phi$.

Define 
\begin{equation}\label{yalpha}
y_\alpha:=e_\alpha-e_{-\alpha}\in\mathfrak{k},\qquad \alpha\in\Phi.
\end{equation}
Then $y_{-\alpha}=-y_\alpha$ ($\alpha\in\Phi$) and $\{y_\alpha\}_{\alpha\in\Phi_+}$
is a linear basis of $\mathfrak{k}$. Furthermore, $Y_i=y_{\alpha_i}$ for $i=1,\ldots,r$.
The following lemma is now immediate.
\begin{lem}
\[
\mathfrak{k}_{\mathbb{Z}}:=\{x\in\mathfrak{g}_{\mathbb{Z}}\,\, | \,\, \omega(x)=x\}
\]
is an integral form of $\mathfrak{k}$ with $\mathbb{Z}$-basis $\{y_\alpha\}_{\alpha\in\Phi_+}$.
The structure constants of $\mathfrak{k}_{\mathbb{Z}}$ with respect to $\{y_\alpha\}_{\alpha\in\Phi_+}$
are given by
\begin{equation}\label{yalphastructure}
[y_\alpha,y_\beta]=\kappa_{\alpha,\beta}y_{\alpha+\beta}-\kappa_{\alpha,-\beta}y_{\alpha-\beta}\qquad (\alpha,\beta\in\Phi_+: \alpha\not=\beta)
\end{equation}
(with the natural interpretations of the right hand side when $\alpha+\beta\not\in\Phi$ and/or $\alpha-\beta\not\in\Phi$).
\end{lem}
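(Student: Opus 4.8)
The plan is to verify the three assertions of the lemma essentially by unwinding definitions, since all the hard structural work has already been done. For the first claim, that $\mathfrak{k}_{\mathbb{Z}}$ is an integral form of $\mathfrak{k}$, I would first note that $\omega$ restricts to an involution on $\mathfrak{g}_{\mathbb{Z}}$ (stated just above), so $\mathfrak{k}_{\mathbb{Z}}:=\{x\in\mathfrak{g}_{\mathbb{Z}}\mid\omega(x)=x\}$ is a well-defined $\mathbb{Z}$-submodule of $\mathfrak{g}_{\mathbb{Z}}$, and it is closed under the Lie bracket because $\mathfrak{g}_{\mathbb{Z}}$ is and $\omega$ is a Lie algebra homomorphism. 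The key point is then that $\{y_\alpha\}_{\alpha\in\Phi_+}$ is a $\mathbb{Z}$-basis. Since $\mathfrak{g}_{\mathbb{Z}}=\textup{span}_{\mathbb{Z}}\{e_\alpha,h_i\}$ and $\omega(e_\alpha)=-e_{-\alpha}$, $\omega(h_i)=-h_i$, an element $x=\sum_\alpha c_\alpha e_\alpha+\sum_i d_i h_i\in\mathfrak{g}_{\mathbb{Z}}$ is $\omega$-fixed iff $c_{-\alpha}=-c_\alpha$ for all $\alpha$ and $d_i=0$ for all $i$; hence $x=\sum_{\alpha\in\Phi_+}c_\alpha(e_\alpha-e_{-\alpha})=\sum_{\alpha\in\Phi_+}c_\alpha y_\alpha$ with $c_\alpha\in\mathbb{Z}$, and conversely every such combination lies in $\mathfrak{k}_{\mathbb{Z}}$. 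The $y_\alpha$ ($\alpha\in\Phi_+$) are linearly independent over $\mathbb{Z}$ because the $e_{\pm\alpha}$ are linearly independent over $\mathbb{C}$. Finally $\mathfrak{k}\simeq\mathbb{C}\otimes_{\mathbb{Z}}\mathfrak{k}_{\mathbb{Z}}$ follows since $\{y_\alpha\}_{\alpha\in\Phi_+}$ was already noted to be a $\mathbb{C}$-basis of $\mathfrak{k}$.

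For the structure constants, I would simply compute $[y_\alpha,y_\beta]=[e_\alpha-e_{-\alpha},e_\beta-e_{-\beta}]$ by bilinearity, obtaining the four terms $[e_\alpha,e_\beta]-[e_\alpha,e_{-\beta}]-[e_{-\alpha},e_\beta]+[e_{-\alpha},e_{-\beta}]$. Using \eqref{calphabeta} and the relation $\kappa_{\gamma,\delta}=-\kappa_{-\gamma,-\delta}$, the first and last terms combine to $\kappa_{\alpha,\beta}(e_{\alpha+\beta}-e_{-\alpha-\beta})=\kappa_{\alpha,\beta}y_{\alpha+\beta}$, and the middle two combine to $-\kappa_{\alpha,-\beta}(e_{\alpha-\beta}-e_{\beta-\alpha})=-\kappa_{\alpha,-\beta}y_{\alpha-\beta}$, using $[e_{-\alpha},e_\beta]=\kappa_{-\alpha,\beta}e_{\beta-\alpha}=-\kappa_{\alpha,-\beta}e_{\beta-\alpha}$. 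This gives \eqref{yalphastructure}. One should check the degenerate cases: if $\alpha+\beta\notin\Phi$ then $\kappa_{\alpha,\beta}=0$ and the corresponding term drops, consistent with the stated interpretation; likewise for $\alpha-\beta$, where one also uses $y_0=e_0-e_0=0$ if $\alpha-\beta=0$, but this is excluded by the hypothesis $\alpha\neq\beta$.

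There is no serious obstacle here; the lemma is "immediate" precisely because the Chevalley basis has been set up so that $\omega$ acts diagonally on it. The only mild subtlety worth stating explicitly is that the hypothesis $\alpha\neq\beta$ in \eqref{yalphastructure} is what lets us ignore the possibility $\alpha-\beta=0$, and the phrase "natural interpretations of the right hand side" covers the cases $\alpha\pm\beta\notin\Phi$, where the relevant $\kappa$ vanishes by the convention $\kappa_{\gamma,\delta}:=0$ for $\gamma+\delta\notin\Phi$, so that $y_{\alpha\pm\beta}$ need not even be defined. I would write the proof in three or four lines, flagging only these two interpretive points.
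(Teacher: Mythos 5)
Your proposal is correct and matches the intended argument: the paper offers no written proof (it declares the lemma ``immediate'' after setting up the $\omega$-adapted Chevalley basis), and your verification --- reading off the $\mathbb{Z}$-basis from the action of $\omega$ on $\{h_i,e_\alpha\}$ and expanding $[e_\alpha-e_{-\alpha},e_\beta-e_{-\beta}]$ with $\kappa_{-\alpha,-\beta}=-\kappa_{\alpha,\beta}$ --- is exactly the computation being left to the reader. Your remarks on the degenerate cases ($\alpha\pm\beta\notin\Phi$, and $\alpha\neq\beta$ excluding Cartan contributions) are the right points to flag.
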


Chevalley involutions of $\mathfrak{g}$ are complex linear extensions of Cartan involutions of
split real forms of $\mathfrak{g}$.  In our present notations the split real form $\mathfrak{g}_0$
of $\mathfrak{g}$ is defined as the real span of the Chevalley basis $\{h_i,e_\alpha\}_{1\leq i\leq r,\alpha\in\Phi}$.
It contains the real form $\mathfrak{h}_0:=\bigoplus_{i=1}^r\mathbb{R}h_i$ of $\mathfrak{h}$
as its Cartan subalgebra, and $\omega_0:=\omega|_{\mathfrak{g}_0}$ is the Cartan involution of $\mathfrak{g}_0$ containing $\mathfrak{h}_0$ in its $-1$-eigenspace. The fix-point Lie subalgebra $\mathfrak{k}_0$ of $\mathfrak{g}_0$ with respect to
$\omega_0$ is generated as real Lie algebra by $Y_i:=e_i-f_i$
($i=1,\ldots,r$), and its complexification is isomorphic to $\mathfrak{k}$.
The real version of Theorem \ref{mainTHM} holds true, with $\mathfrak{k}$ replaced by $\mathfrak{k}_0$ and $\mathcal{L}(A)$ (Definition \ref{DefgenOns}) defined over the real numbers.

The fix-point Lie subalgebras $\mathfrak{k}$
can now be described using the explicit description of $\mathfrak{k}_0$,
see \cite[Appendix C]{Kn} (note that in the list of properties of
$\mathfrak{g}_0\simeq \mathfrak{so}(2p+1,2q+1)$ in \cite[Appendix C, page 528]{Kn}, 
the fact that $\mathfrak{so}(2p+1,2p+1)$ is a split real form is missing). It leads to the 
following table (the type refers to the type of the Cartan matrix $A$, i.e. the type of the simple Lie algebra $\mathfrak{g}$).\\

\begin{enumerate}
\item[] {\bf Type} $A_r$ ($r\geq 2$):  $\mathfrak{k}\simeq\mathfrak{so}_{r+1}(\mathbb{C})$.
\item[] {\bf Type} $B_r$ ($r\geq 3$): $\mathfrak{k}\simeq \mathfrak{so}_r(\mathbb{C})\oplus
\mathfrak{so}_{r+1}(\mathbb{C})$.
\item[] {\bf Type} $C_r$ ($r\geq 1$): $\mathfrak{k}\simeq\mathfrak{gl}_r(\mathbb{C})$.
\item[] {\bf Type} $D_r$ ($r\geq 4$):  $\mathfrak{k}\simeq \mathfrak{so}_r(\mathbb{C})\oplus
\mathfrak{so}_r(\mathbb{C})$.
\item[] {\bf Type} $E_6$: $\mathfrak{k}\simeq \mathfrak{sp}_4(\mathbb{C})$.
\item[] {\bf Type} $E_7$: $\mathfrak{k}\simeq \mathfrak{sl}_8(\mathbb{C})$.
\item[] {\bf Type} $E_8$: $\mathfrak{k}\simeq \mathfrak{so}_{16}(\mathbb{C})$.
\item[] {\bf Type} $F_4$: $\mathfrak{k}\simeq \mathfrak{sp}_3(\mathbb{C})\oplus \mathfrak{sl}_2(\mathbb{C})$.
\item[] {\bf Type} $G_2$; $\mathfrak{k}\simeq \mathfrak{sl}_2(\mathbb{C})\oplus\mathfrak{sl}_2(\mathbb{C})$.
\end{enumerate}
\vspace{.4cm}

{}From the list it is clear that $\mathfrak{k}$ is reductive, and that $\mathfrak{k}$ is semisimple unless the Cartan matrix $A$
is of type $C_r$ ($r\geq 1$).

\begin{rema}
Dolan-Grady type presentations of the quantum analogue of $\mathfrak{k}$ for any symmetric pair
$(\mathfrak{g},\mathfrak{k})$ were considered in \cite{Le,K2}. They play an important role in quantum harmonic analysis because of 
the intrinsic rigidity of quantum symmetric pairs. The quantized universal enveloping algebra $U_q(\mathfrak{g})$, defined by quantizing a Serre presentation of $U(\mathfrak{g})$, depends on a distinguished choice of 
Cartan subalgebra $\mathfrak{h}$. In addition, the quantum analogue of a symmetric pair requires fixing a representative of the isomorphism class of the involution that stabilizes $\mathfrak{h}$
and that has the additional property that its $-1$-eigenspace in $\mathfrak{h}$ is of maximal 
dimension
(the special case under consideration in this paper
corresponds to the most extreme case that the whole Cartan subalgebra $\mathfrak{h}$ is contained in the $-1$-eigenspace of the involution).
It is in this setup that the Serre type
presentation of $\mathfrak{g}$ (resp. $U_q(\mathfrak{g})$) does not induce a Serre type presentation of (the quantum analogue of) $\mathfrak{k}$, but instead leads to Dolan-Grady type presentations.
Note the recent paper \cite{Le2} in which an important first step is made to reveal the reductive nature of $\mathfrak{k}$ in the quantum context. 
\end{rema}

So on the one hand $\mathfrak{k}$ is reductive, hence admits a Serre type presentation, while on the other hand $\mathfrak{k}$ admits an Dolan-Grady type presentation (Theorem \ref{mainTHM}\,{\bf a}). The interplay between these two presentations of $\mathfrak{k}$ is worked out for type $C_r$ ($r\geq 1$)
in Section \ref{section4}.

\subsection{The untwisted affine case}\label{affineSection}

Let $A$ be an indecomposable generalized Cartan matrix of untwisted affine type $X_r^{(1)}$ (see \cite[\S 4.8, Table Aff 1]{Ka}). 
By the affine version \cite[Thm. 7.4]{Ka} of Serre's Theorem, the corresponding Kac-Moody Lie algebra $\mathfrak{g}(A)$ is an untwisted affine Lie algebra $\widetilde{\mathfrak{g}}$.
We first describe the associated fix-point Lie subalgebra $\widetilde{\mathfrak{k}}:=
\mathfrak{k}(A)$
in terms of
the loop presentation of $\widetilde{\mathfrak{g}}$. 

We recall the loop presentation of the untwisted affine Lie algebras following \cite[Chpt. 7]{Ka}.
The starting point is a simple Lie algebra $\mathfrak{g}$ of type $X_r$ (we will freely use the notations
from the previous subsection regarding the structure theory of $\mathfrak{g}$).
The loop algebra of $\mathfrak{g}$ is 
\[
L\mathfrak{g}=\mathfrak{g}\otimes\mathbb{C}[t,t^{-1}].
\]
For $x\in\mathfrak{g}$ and $k\in\mathbb{Z}$ we write $x[k]:=x\otimes t^k$. Let 
\[
\widehat{\mathfrak{g}}:=L\mathfrak{g}\oplus\mathbb{C}c
\]
be its unique nontrivial central extension. Its Lie bracket is determined by
\begin{equation}\label{Lieaffine}
[x[k],y[m]]=[x,y][k+m]+k\delta_{k,-m}(x,y)c.
\end{equation}
The affine Lie algebra $\widetilde{\mathfrak{g}}$ is the extension of $\widehat{\mathfrak{g}}$ by the derivation $t\frac{d}{dt}$, i.e.
\[
\widetilde{\mathfrak{g}}:=\widehat{\mathfrak{g}}\oplus\mathbb{C}d
\]
with $[d,c]=0$ and $[d,x[m]]=mx[m]$. We extend $\bigl(\cdot,\cdot\bigr): \mathfrak{g}\times\mathfrak{g}\rightarrow\mathbb{C}$ to 
a nondegenerate invariant symmetric form on $\widetilde{\mathfrak{g}}\times\widetilde{\mathfrak{g}}$ by 
\[
(x[k],y[m])=\delta_{k,-m}(x,y),\quad (c,d)=1,\quad (c,c)=0=(d,d),\quad (c,x[k])=0=(d,x[k]).
\]
The abelian Lie subalgebras $\widehat{\mathfrak{h}}=\mathfrak{h}\oplus\mathbb{C}c$ of $\widehat{\mathfrak{g}}$ and
$\widetilde{\mathfrak{h}}=\widehat{\mathfrak{h}}\oplus\mathbb{C}d$ of $\widetilde{\mathfrak{g}}$
play the role of Cartan subalgebras. Define $\Lambda_0,\delta\in\widetilde{\mathfrak{h}}^*$ by
\begin{equation*}
\begin{split}
\Lambda_0(c)&=1,\qquad \Lambda_0(\mathfrak{h})=0=\Lambda_0(d),\\
\delta(d)&=1,\qquad \delta(\mathfrak{h})=0=\delta(c).
\end{split}
\end{equation*}
We identify $\mathfrak{h}^*$ with the subspace of $\widetilde{\mathfrak{h}}^*$ vanishing at $\mathbb{C}c\oplus\mathbb{C}d$.
Then 
\[
\widetilde{\mathfrak{h}}^*=\mathfrak{h}^*\oplus\mathbb{C}\Lambda_0\oplus\mathbb{C}\delta.
\]

Since $(\cdot,\cdot)\vert_{\widetilde{\mathfrak{h}}\times\widetilde{\mathfrak{h}}}$ is nondegenerate there exists for $\lambda\in\widetilde{\mathfrak{h}}^*$
a unique $t_\lambda\in\widetilde{\mathfrak{h}}$ such that $\lambda(h)=(h,t_\lambda)$ for all $h\in\widetilde{\mathfrak{h}}$. For $\lambda\in\mathfrak{h}^*$ we have 
$t_\lambda\in\mathfrak{h}$, which coincides with the element $t_\lambda$ as defined in the previous subsection.
Note furthermore that
\[
t_{\Lambda_0}=d,\qquad t_\delta=c.
\]
The bilinear form on $\widetilde{\mathfrak{h}}^*$ obtained from the nondegenerate symmetric bilinear form
$(\cdot,\cdot)\vert_{\widetilde{\mathfrak{h}}\times\widetilde{\mathfrak{h}}}$
by dualizing is again denoted by $(\cdot,\cdot)$. It satisfies
\[
(\lambda,\mu):=\lambda(t_\mu)=(t_\lambda,t_\mu),\qquad \lambda,\mu\in\widetilde{\mathfrak{h}}^*.
\]
Note that
\[
(\Lambda_0,\Lambda_0)=0=(\delta,\delta),\qquad (\Lambda_0,\delta)=1,\qquad (\Lambda_0,\mathfrak{h}^*)=0=(\delta,\mathfrak{h}^*).
\]

The affine root system $\widetilde{\Phi}=\widetilde{\Phi}^{\textup{re}}\cup\widetilde{\Phi}^{\textup{im}}\subset\widetilde{\mathfrak{h}}^*$ of $\widetilde{\mathfrak{g}}$ is 
\[
\widetilde{\Phi}^{\textup{re}}:=\{\alpha+k\delta\,\, | \,\, \alpha\in\Phi, k\in\mathbb{Z}\},\qquad
\widetilde{\Phi}^{\textup{im}}:=\{k\delta\,\, | \,\, k\in\mathbb{Z}\setminus\{0\}\}.
\]
We take as associated set of positive affine roots $\widetilde{\Phi}_+:=\widetilde{\Phi}^{\textup{re}}_+\cup\widetilde{\Phi}^{\textup{im}}_+$ with 
\[
\widetilde{\Phi}^{\textup{re}}_+:=\Phi_+\cup\{\alpha+k\delta\,\, | \,\, \alpha\in\Phi, k\in\mathbb{Z}_{>0}\},\qquad
\widetilde{\Phi}^{\textup{im}}_+:=\mathbb{Z}_{>0}\delta.
\]
We denote the negative affine roots by $\widetilde{\Phi}_-=\widetilde{\Phi}^{\textup{re}}_-\cup\widetilde{\Phi}^{\textup{im}}_-$. The corresponding simple roots are
$\widetilde{\Pi}:=\{\alpha_0,\alpha_1,\ldots,\alpha_r\}$ with the additional affine simple root given by
\[
\alpha_0:=-\theta+\delta
\]
with $\theta\in\Phi_+$ the highest root of $\Phi$. Note that $\theta$ is a long root, hence $(\theta,\theta)=2$ by our convention on the normalisation of $(\cdot,\cdot)$.

The root space decomposition of $\widetilde{\mathfrak{g}}$ is 
\[
\widetilde{\mathfrak{g}}=\widetilde{\mathfrak{h}}\oplus\bigoplus_{\gamma\in\widetilde{\Phi}}\widetilde{\mathfrak{g}}_{\gamma}
\]
with root spaces $\widetilde{\mathfrak{g}}_\gamma:=\{x\in\widetilde{\mathfrak{g}}\,\, | \,\, [h,x]=\gamma(h)x\,\,\, \forall\, h\in\widetilde{\mathfrak{h}} \}$ for $\gamma\in\widetilde{\Phi}$.
The root spaces are concretely given by
\begin{equation*}
\begin{split}
\widetilde{\mathfrak{g}}_{\alpha+k\delta}&=\mathfrak{g}_\alpha\otimes t^k\qquad (\alpha\in\Phi,\,\, k\in\mathbb{Z}),\\
\widetilde{\mathfrak{g}}_{k\delta}&=\mathfrak{h}\otimes t^k\qquad\,\,\, (k\in\mathbb{Z}\setminus\{0\}).
\end{split}
\end{equation*}
In particular, $m(\gamma)=1$ if $\gamma\in\widetilde{\Phi}^{\textup{re}}$ and $m(\gamma)=r$ if 
$\gamma\in\widetilde{\Phi}^{\textup{im}}$.

The description of $\widetilde{\mathfrak{g}}$ as a Kac-Moody algebra is obtained as follows. Define 
\[
h_\gamma:=\frac{2t_\gamma}{(\gamma,\gamma)},\qquad \gamma\in\widetilde{\Phi}^{\textup{re}}
\]
and write $\widetilde{\Pi}^\vee:=\{h_0,h_1,\ldots,h_r\}$ with 
$h_j:=h_{\alpha_j}$ for $j=0,\ldots,r$.  Note that for  $\gamma=\alpha\in\Phi$ and $j=i\in\{1,\ldots,r\}$ the elements $h_\gamma$ and $h_j$ are the elements $h_\alpha\in\mathfrak{h}$ and $h_i\in\mathfrak{h}$ as defined in the previous subsection.
In addition,
\[
h_0=t_{\alpha_0}=c-t_\theta=c-h_\theta
\]
since $(\alpha_0,\alpha_0)=(-\theta+\delta,-\theta+\delta)=(\theta,\theta)=2$. 

Then $(\widetilde{\mathfrak{h}},\widetilde{\Pi},
\widetilde{\Pi}^\vee)$ is a realization of the affine Cartan matrix $\widetilde{A}:=
\bigl(\alpha_j(h_i)\bigr)_{i,j=0}^r$ of type $X_r^{(1)}$ (it contains the Cartan matrix
$A=\bigl(\alpha_j(h_i)\bigr)_{i,j=1}^r$ of type $X_r$).
The affine Cartan matrix $\widetilde{A}$ is symmetrizable by \cite[Lem. 4.6]{Ka}.
Furthermore, the rank of $\widetilde{A}$ is $r$ and the dimension of $\widetilde{\mathfrak{h}}$
is $r+2=2n-r$ with $n:=r+1$.

Let $e_i\in\mathfrak{g}_{\alpha_i}$ and
$f_i\in\mathfrak{g}_{-\alpha_i}$ ($i=1,\ldots,r$) be the Chevalley generators of $\mathfrak{g}$ 
and $\omega$ the corresponding Chevalley involution of $\mathfrak{g}$.
Choose 
\[
E_0\in\mathfrak{g}_{-\theta}
\]
such that $(E_0,\omega(E_0))=-1$ and set
\[
F_0:=-\omega(E_0)\in\mathfrak{g}_\theta.
\]
Then $(E_0,F_0)=1$ and $[E_0,F_0]=-h_{\theta}$. Define now 
$e_0\in\widetilde{\mathfrak{g}}_{\alpha_0}$ and $f_0\in\widetilde{\mathfrak{g}}_{-\alpha_0}$
by 
\[
e_0:=E_0[1],\qquad f_0:=F_0[-1].
\]
Note that $[e_0,f_0]=h_0$. For $i=1,\ldots,r$ interpret $e_i$ and $f_i$ as elements in 
$\widetilde{\mathfrak{g}}$ by the canonical Lie algebra embedding $\mathfrak{g}\hookrightarrow\widetilde{\mathfrak{g}}$,
$x\mapsto x\otimes 1$.
Then \cite[Thm. 7.4]{Ka} shows that $\widetilde{\mathfrak{g}}
\simeq\mathfrak{g}(\widetilde{A})$ by identifying 
$e_0,\ldots,e_r, f_0,\ldots,f_r$ with the Chevalley generators of $\widetilde{\mathfrak{g}}$.
We will freely use the identification $\widetilde{\mathfrak{g}}\simeq\mathfrak{g}(\widetilde{A})$ in this subsection. 

We write $\widetilde{\omega}$ for the Chevalley involution of $\widetilde{\mathfrak{g}}$. It is characterized by
$\widetilde{\omega}(e_j)=-f_j$ and $\widetilde{\omega}(h_j)=-h_j$ for $j=0,\ldots,r$. Write
 $\widetilde{\mathfrak{k}}$ for the fix-point Lie subalgebra. The corresponding generators $Y_j$ ($0\leq j\leq r$) of $\widetilde{\mathfrak{k}}$ are $Y_j=e_j-f_j$ for $j=0,\ldots,r$. Theorem \ref{mainTHM}{\bf a}
leads to the Dolan-Grady type presentation of $\widetilde{\mathfrak{k}}$ in terms of the generators $Y_j$ ($j=0,\ldots,r$) by the identification of $\widetilde{\mathfrak{k}}$ with the generalized Onsager algebra $\widetilde{\mathcal{L}}:=\mathcal{L}(\widetilde{A})$.
 
 Note that $\widetilde{\omega}$ extends the Chevalley involution $\omega$ of
$\mathfrak{g}$, and that
\[
\widetilde{\omega}(x[k])=\omega(x)[-k], \quad \widetilde{\omega}(c)=-c,\quad \widehat{\omega}(d)=-d
\]
for $x\in\mathfrak{g}$ and $k\in\mathbb{Z}$ (see \cite[\S 7.6]{Ka}). 

Let $\{h_i,e_\alpha\}_{1\leq i\leq r, \alpha\in\Phi}$ be a Chevalley basis of $\mathfrak{g}$ 
satisfying
\begin{enumerate}
\item[{\bf a.}] $e_{\alpha_j}=e_j$ and $e_{-\alpha_j}=f_j$ for $j=1,\ldots,r$,
\item[{\bf b.}] $e_\theta=F_0$,
\item[{\bf c.}] $\omega(e_\alpha)=-e_{-\alpha}$ for all $\alpha\in\Phi$.
\end{enumerate}
Such a Chevalley basis exists by \cite[\S 25.2]{Hu}
since $F_0\in\mathfrak{g}_\theta$ and $(F_0,\omega(F_0))=-1$.
Then 
\[
\widetilde{\mathfrak{g}}_{\mathbb{Z}}:=
\mathbb{Z}c\oplus\mathbb{Z}d\oplus\bigoplus_{1\leq i\leq r, k\in\mathbb{Z}}
\mathbb{Z}h_i[k]\oplus\bigoplus_{\alpha\in\Phi, k\in\mathbb{Z}}\mathbb{Z}e_\alpha[k]
\]
is a $\widetilde{\omega}$-stable integral form of $\widetilde{\mathfrak{g}}$, see \cite{G}.
Furthermore,
\begin{equation*}
\widetilde{\mathfrak{k}}_{\mathbb{Z}}:=
\{x\in\widetilde{\mathfrak{g}}_{\mathbb{Z}} \,\,\, | \,\,\, \widetilde{\omega}(x)=x\}
=\bigoplus_{\gamma\in\widetilde{\Phi}_+, 1\leq i\leq m(\gamma)}\mathbb{Z}y_\gamma^{(i)}
\end{equation*}
is an integral form of $\widetilde{\mathfrak{k}}$, with the elements $y_\gamma^{(i)}$ defined
by
\begin{equation}\label{integralbasisk}
y_\gamma^{(i)}:=
\begin{cases}
e_\alpha[k]-e_{-\alpha}[-k]\quad &\hbox{ if }\,\,
 \gamma=\alpha+k\delta\in\widetilde{\Phi}^{\textup{re}}_+,\\
h_i[k]-h_i[-k]\quad &\hbox{ if }\,\, \gamma=k\delta\in\widetilde{\Phi}^{\textup{im}}_+
\end{cases}
\end{equation}
for $i=1,\ldots,m(\gamma)$ (note that $m(\gamma)=1$ for 
$\gamma\in\widetilde{\Phi}_+^{\textup{re}}$). We write
$y_\gamma:=y_\gamma^{(1)}$ ($\gamma\in\widetilde{\Phi}_+^{\textup{re}}$), 
$y_0^{(i)}:=0$ ($1\leq i\leq r$) and $y_{-\gamma}^{(i)}:=-y_\gamma^{(i)}$
for $\gamma\in\widetilde{\Phi}_+$ and $1\leq i\leq m(\gamma)$. Note that
$Y_j=y_{\alpha_j}$ for $0\leq j\leq r$.

For $\alpha\in\Phi$ let $k_i(\alpha)\in\mathbb{Z}$ such that 
$h_\alpha=\sum_{i=1}^rk_i(\alpha)h_i$.
Recall the constants $\kappa_{\alpha,\beta}\in\mathbb{Z}$
such that 
\[
[e_\alpha,e_\beta]=\kappa_{\alpha,\beta}e_{\alpha+\beta},\qquad (\alpha,\beta\in\Phi: \alpha+\beta\not=0),
\]
cf. \eqref{calphabeta} (recall that by convention $\kappa_{\alpha,\beta}$ is zero if $\alpha+\beta\not\in\Phi$).

\begin{prop}
The structure constants of $\widetilde{\mathfrak{k}}_{\mathbb{Z}}$ with respect to its $\mathbb{Z}$-basis $\{y_\gamma^{(i)}\}_{\gamma\in\widetilde{\Phi}_+, 1\leq i\leq m(\gamma)}$ are determined by: 
\begin{equation}\label{rel1}
\begin{split}
\lbrack y_{\alpha+\ell\delta},y_{\alpha+m\delta}\rbrack&=
\sum_{i=1}^rk_i(\alpha)y_{(m-\ell)\delta}^{(i)},\\
\lbrack y_{\alpha+\ell\delta},y_{-\alpha+m\delta}\rbrack&=
\sum_{i=1}^rk_i(\alpha)y_{(m+\ell)\delta}^{(i)}
\end{split}
\end{equation}
for $\alpha\in\Phi$ and $\ell,m\in\mathbb{Z}$, 
\begin{equation}\label{rel2}
\lbrack y_{\alpha+\ell\delta},y_{\beta+m\delta}\rbrack=\kappa_{\alpha,\beta}y_{\alpha+\beta+(\ell+m)\delta}
-\kappa_{\alpha,-\beta}y_{\alpha-\beta+(\ell-m)\delta}
\end{equation}
for $\alpha,\beta\in\Phi$ with $\alpha\not=\pm\beta$ and $\ell,m\in\mathbb{Z}$,
\begin{equation}\label{rel3}
\lbrack y_{\ell\delta}^{(i)},y_{\alpha+m\delta}\rbrack=
\alpha(h_i)y_{\alpha+(\ell+m)\delta}-\alpha(h_i)y_{\alpha+(m-\ell)\delta}
\end{equation}
for $\alpha\in\Phi$, $\ell,m\in\mathbb{Z}$ and $1\leq i\leq r$, and
\begin{equation}\label{rel4}
[y_{\ell\delta}^{(i)}, y_{m\delta}^{(j)}]=0
\end{equation}
for $\ell,m\in\mathbb{Z}$ and $1\leq i,j\leq r$.
\end{prop}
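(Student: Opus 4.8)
The structure constants are computed directly in the loop model $\widetilde{\mathfrak{g}} = L\mathfrak{g}\oplus\mathbb{C}c\oplus\mathbb{C}d$, using the explicit basis elements $y_\gamma^{(i)}$ from \eqref{integralbasisk}. The plan is to expand each bracket $[y_\gamma^{(i)}, y_{\gamma'}^{(i')}]$ using bilinearity and the affine bracket \eqref{Lieaffine}, then resolve the resulting loop-algebra expressions back into the $y$-basis. Since $\widetilde{\mathfrak{k}}_{\mathbb{Z}} = \bigoplus \mathbb{Z}y_\gamma^{(i)}$ is a $\widetilde{\omega}$-stable integral form and the bracket preserves it, each bracket is automatically an integral combination of the $y$'s, so it suffices to match the $\mathfrak{g}\otimes t^k$ components.

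\textbf{Step 1 (the central-term-free cases, i.e.\ \eqref{rel2}).} For $\alpha,\beta\in\Phi$ with $\alpha\neq\pm\beta$, write $y_{\alpha+\ell\delta} = e_\alpha[\ell] - e_{-\alpha}[-\ell]$ and similarly for $\beta$. Expand the bracket via \eqref{Lieaffine}; the central term vanishes because $[e_\alpha, e_\beta]$ lands in a nonzero root space (as $\alpha+\beta\neq 0$, $\alpha-\beta\neq 0$), so no $k\delta_{k,-m}(x,y)c$ contribution arises. Using \eqref{calphabeta}, $[e_\alpha,e_\beta]=\kappa_{\alpha,\beta}e_{\alpha+\beta}$ and $[e_\alpha,e_{-\beta}]=\kappa_{\alpha,-\beta}e_{\alpha-\beta}$; collecting the four terms and using $\kappa_{\alpha,\beta}=-\kappa_{-\alpha,-\beta}$ together with the definition $y_{-\gamma}^{(i)}=-y_\gamma^{(i)}$, one obtains \eqref{rel2}. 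One must check the degenerate sub-cases where $\alpha+\beta$ or $\alpha-\beta$ fails to be a root (the corresponding $\kappa$ is $0$ by convention) and where $\alpha\pm\beta$ happens to be $0$ — but the hypothesis $\alpha\neq\pm\beta$ rules the latter out.

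\textbf{Step 2 (brackets hitting the Cartan, i.e.\ \eqref{rel1} and \eqref{rel3}).} For \eqref{rel1}, take $\beta=\pm\alpha$: now $[e_\alpha, e_{-\alpha}] = h_\alpha = \sum_i k_i(\alpha) h_i$ lands in $\mathfrak{h}$, so the bracket of $y$'s produces both $h_i[\,\cdot\,]$-terms and a central $c$-term. However, the central term is killed upon subtraction: the two "$\pm\delta$" pieces of each $y$ combine so that the $c$-contributions cancel (equivalently, $\widetilde{\omega}(c)=-c$ forces the fixed-point element to have zero $c$-component). What remains assembles into $\sum_i k_i(\alpha)\,(h_i[n] - h_i[-n]) = \sum_i k_i(\alpha)\, y^{(i)}_{n\delta}$ with $n = m-\ell$ or $m+\ell$ respectively; the sign bookkeeping uses $[e_{-\alpha}, e_\alpha] = -h_\alpha$. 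For \eqref{rel3}, bracket $y^{(i)}_{\ell\delta} = h_i[\ell] - h_i[-\ell]$ against $y_{\alpha+m\delta}$: since $[h_i, e_\alpha] = \alpha(h_i) e_\alpha$ stays in the root space $\mathfrak{g}_\alpha$, again no central term survives, and one directly reads off \eqref{rel3} after noting $[h_i, e_{-\alpha}] = -\alpha(h_i)e_{-\alpha}$ and $\alpha(h_i) = -(-\alpha)(h_i)$.

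\textbf{Step 3 (the imaginary–imaginary bracket \eqref{rel4}).} Bracket $y^{(i)}_{\ell\delta}$ with $y^{(j)}_{m\delta}$: all four terms involve $[h_i[\pm\ell], h_j[\pm m]] = [h_i,h_j][\,\cdot\,] + (\pm\ell)\delta_{\cdot}(h_i,h_j)c$, and $[h_i,h_j]=0$ in $\mathfrak{h}$. The surviving central terms pair up as $\ell(h_i,h_j)c$-type contributions with opposite signs coming from the $\pm$ crossings, and cancel — giving $0$. This is the shortest case. I expect the \textbf{main obstacle} to be Step 2: carefully tracking the sign conventions (the two conventions $[e_\alpha,e_{-\alpha}]=h_\alpha$ versus $h_{-\alpha}=-h_\alpha$, the $y_{-\gamma}=-y_\gamma$ rule, and the shift $\ell\mapsto -\ell$ under $\widetilde{\omega}$) so that the central terms visibly cancel and the coefficients $k_i(\alpha)$ land with the correct sign in both lines of \eqref{rel1}. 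A clean way to organize this, avoiding case-by-case sign chasing, is to note that $\widetilde{\mathfrak{k}}_{\mathbb{Z}}$ is the $\widetilde{\omega}$-fixed part and that the projection $\widetilde{\mathfrak{g}}\to\widetilde{\mathfrak{k}}$, $x\mapsto \tfrac12(x+\widetilde{\omega}(x))$, is equivariant for the bracket on the fixed subalgebra; then each structure constant is obtained by applying this projection to the (known) loop-algebra bracket of the "positive" pieces $e_\alpha[\ell]$, $h_i[\ell]$, which makes the cancellation of $c$ automatic and reduces everything to the $\mathfrak{g}$-level identities \eqref{calphabeta} and the $\mathfrak{h}$-action on root spaces.
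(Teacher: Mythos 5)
Your proposal is correct and coincides with the paper's proof, which simply expands the brackets of the elements \eqref{integralbasisk} via the affine bracket \eqref{Lieaffine} together with $[h_i,e_\alpha]=\alpha(h_i)e_\alpha$, $[e_\alpha,e_{-\alpha}]=h_\alpha$ and \eqref{calphabeta}; your extra observations (the vanishing of central terms by invariance of the form on root spaces, equivalently because $\widetilde{\omega}(c)=-c$ forces the fixed-point algebra to have zero $c$-component) are exactly the bookkeeping the paper leaves implicit.
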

\begin{proof}
This follows from direct computations using \eqref{Lieaffine} and the relations
$[h_i,e_\alpha]=\alpha(h_i)e_\alpha$,  $[e_\alpha,e_{-\alpha}]=h_\alpha$
and \eqref{calphabeta}.
\end{proof}
\begin{eg}
Consider the special case $\widetilde{\mathfrak{g}}=\widetilde{\mathfrak{sl}}_2$ with the
affine Cartan matrix $\widetilde{A}=\left(\begin{matrix} 2 & -2\\ -2 & 2\end{matrix}\right)$ of type
$A_1^{(1)}$. In this case $\widetilde{\mathfrak{k}}$ is the Onsager algebra and $\widetilde{\mathfrak{k}}\simeq\widetilde{\mathcal{L}}$ gives it Dolan-Grady type presentation.
Write for $m\in\mathbb{Z}$,
\[
A_m:=y_{\alpha_1+m\delta},\qquad G_m:=y_{m\delta}^{(1)}.
\]
Note that $A_m=-y_{\alpha_0-(m+1)\delta}$, $G_{-m}=-G_m$ and
$G_0=0$. In particular, 
\[
\{A_\ell\}_{\ell\in\mathbb{Z}}\cup\{G_m\}_{m\in\mathbb{Z}_{>0}}
\] 
is a 
$\mathbb{Z}$-basis of $\widetilde{\mathfrak{k}}_{\mathbb{Z}}$. By the previous proposition,
\[
[A_\ell,A_m]=G_{m-\ell},\quad [G_\ell,A_m]=2A_{\ell+m}-2A_{m-\ell},\quad
[G_\ell,G_m]=0
\]
for $\ell,m\in\mathbb{Z}$, which gives the Onsager \cite{O} presentation of $\widetilde{\mathfrak{k}}$.
\end{eg}

\section{The symplectic case}\label{section4}
We start this section by explicitly comparing the Serre presentation and the Dolan-Grady type presentation of $\mathfrak{k}$ when $A$ is of type $C_r$ ($r\geq 1$). We will explicitly evaluate its one-dimensional representations on the Chevalley basis of $\mathfrak{k}$. 
We end the section by considering the untwisted affine case. 

Let $\mathfrak{g}=\mathfrak{sp}_r(\mathbb{C})$ be the symplectic Lie algebra consisting of $2r\times 2r$ complex-valued matrices
\begin{equation*}
\left(\begin{matrix} B & C\\ D & -B^T\end{matrix}\right)
\end{equation*}
with $B,C,D\in\mathfrak{gl}_r(\mathbb{C})$ and $C^T=C$, $D^T=D$. Let 
$\mathfrak{h}$ be the Cartan subalgebra of $\mathfrak{g}$ consisting of 
the diagonal matrices in $\mathfrak{sp}_r(\mathbb{C})$. 

Define $\epsilon_j\in\mathfrak{h}^*$ by
\[
\epsilon_j\left(\begin{matrix}E_{k,k} & 0\\ 0 & -E_{k,k}\end{matrix}\right):=\delta_{j,k}
\]
with $E_{k,\ell}$ ($1\leq k,\ell\leq r$) the matrix units in $\mathfrak{gl}_r(\mathbb{C})$.
The root system $\Phi$ of $(\mathfrak{g},\mathfrak{h})$ is 
\[
\Phi=\{\pm(\epsilon_k\pm\epsilon_\ell)\}_{1\leq k\not=\ell\leq r}\cup\{\pm 2\epsilon_j\}_{j=1}^r
\]
($\Phi=\{\pm 2\epsilon_1\}$ for $r=1$). Note that the bilinear form on $\mathfrak{h}^*$ induced from
the normalized invariant symmetric bilinear form on $\mathfrak{sp}_r(\mathbb{C})$ is given by $\bigl(\epsilon_i,\epsilon_j\bigr)=\frac{1}{2}\delta_{i,j}$.

Take as simple roots $\Pi=\{\alpha_k\}_{k=1}^r\subset \mathfrak{h}^\ast$ with
\[
\alpha_k:=\epsilon_k-\epsilon_{k+1}\quad (1\leq k<r),\qquad \alpha_r:=2\epsilon_r.
\]
Then $\Pi^\vee=\{h_j=h_{\alpha_j}\}_{j=1}^r\subset\mathfrak{h}$ is explicitly given by
\[
h_j=\left(\begin{matrix} E_{j,j}-E_{j+1,j+1} & 0\\ 0 & -E_{j,j}+E_{j+1,j+1}\end{matrix}\right),
\qquad\,\, h_r=\left(\begin{matrix} E_{r,r} & 0\\ 0 & -E_{r,r}\end{matrix}\right)
\]
for $1\leq j<r$. 
As Chevalley generators $e_1,\ldots, e_r,f_1,\ldots,f_r$ of $\mathfrak{sp}_r(\mathbb{C})$ we take
\begin{equation*}
\begin{split}
e_j&:=\left(\begin{matrix} E_{j,j+1} & 0\\ 0 & -E_{j+1,j}\end{matrix}\right),\qquad\qquad
\qquad
e_r:=\left(\begin{matrix} 0 & E_{r,r}\\ 0 & 0\end{matrix}\right),\\
f_j&:=\left(\begin{matrix} E_{j+1,j} & 0\\ 0 & -E_{j,j+1}\end{matrix}\right),\qquad\qquad
\qquad
f_r:=\left(\begin{matrix} 0 & 0\\ E_{r,r} & 0\end{matrix}\right)
\end{split}
\end{equation*}
for $1\leq j<r$. The associated Chevalley involution $\omega$ then becomes $\omega(X):=-X^T$ ($X\in\mathfrak{sp}_r(\mathbb{C})$). An extension of $\{h_j,e_j,f_j\}_{j=1}^r$ to a Chevalley basis $\{h_j,e_\alpha\}_{1\leq j\leq r, \alpha\in\Phi}$ of $\mathfrak{sp}_r(\mathbb{C})$
such that $e_{\alpha_j}=e_j$, $e_{-\alpha_j}=f_j$ ($1\leq j\leq r$) and $\omega(e_\alpha)=-e_{-\alpha}$ for all $\alpha\in\Phi$, is given by
\begin{equation*}
\begin{split}
e_{\epsilon_k-\epsilon_\ell}&:=\left(\begin{matrix} E_{k,\ell} & 0\\ 0 & -E_{\ell,k}\end{matrix}\right),\qquad
e_{\epsilon_k+\epsilon_\ell}:=\left(\begin{matrix} 0 & E_{k,\ell}+E_{\ell,k}\\ 0 & 0\end{matrix}\right),
\qquad
e_{-\epsilon_k-\epsilon_\ell}:=\left(\begin{matrix} 0 & 0\\ E_{k,\ell}+E_{\ell,k} & 0\end{matrix}\right)\\
e_{2\epsilon_j}&:=\left(\begin{matrix} 0 & E_{j,j}\\ 0 & 0\end{matrix}\right),
\qquad\qquad e_{-2\epsilon_j}:=\left(\begin{matrix} 0 & 0\\  E_{j,j} & 0\end{matrix}\right)
\end{split}
\end{equation*}
for $1\leq k\not=\ell\leq r$ and $1\leq j\leq r$.

The fix-point Lie subalgebra is
\begin{equation}\label{fixedpointsp}
\mathfrak{k}=
\left\{\left(\begin{matrix} B & C\\ -C & B\end{matrix}\right) \,\, \vline \,\, B^T=-B \,\,\,\, \&\,\,\,\, C^T=C\right\},
\end{equation}
with associated generators $Y_1,\ldots,Y_r$ given by $Y_j:=e_j-f_j$ ($1\leq j\leq r$) and 
Chevalley-type basis $\{y_\alpha\}_{\alpha\in\Phi_+}$ given by $y_\alpha:=e_\alpha-e_{-\alpha}$ for 
$\alpha\in\Phi$. Note that $Y_j=e_{\alpha_j}$ for $1\leq j\leq r$.

The map
\begin{equation*}
\eta\left(\begin{matrix} B & C\\ -C & B\end{matrix}\right):=B+iC
\end{equation*}
for $B,C\in\mathfrak{gl}_r(\mathbb{C})$ satisfying $B^T=-B$ and $C^T=C$ defines a
Lie algebra isomorphism $\eta: \mathfrak{k}\overset{\sim}{\longrightarrow} \mathfrak{gl}_r(\mathbb{C})$. 
Note that $\eta(h_j)=E_{j,j}-E_{j+1,j+1}$ ($1\leq i<r$), $\eta(h_r)=E_{r,r}$ and
\[
\eta(Y_j)=E_{j,j+1}-E_{j+1,j},\qquad \eta(Y_r)=iE_{r,r}
\]
for $1\leq j<r$. Furthermore,
\begin{equation}\label{sigmabasis}
\begin{split}
\eta(y_{\epsilon_j-\epsilon_k})&=\bigl(E_{j,k}-E_{k,j}\bigr),\\
\eta(y_{\epsilon_j+\epsilon_k})&=i\bigl(E_{j,k}+E_{k,j}\bigr),\\
\eta(y_{2\epsilon_{\ell}})&=iE_{\ell,\ell}
\end{split}
\end{equation}
for $1\leq j<k\leq r$ and $1\leq \ell\leq r$.

 Transporting the Dolan-Grady type presentation of $\mathfrak{k}$ (see Theorem
\ref{mainTHM}) through the Lie algebra isomorphism $\eta$ gives
the following 
presentation of the general linear Lie algebra $\mathfrak{gl}_r(\mathbb{C})$.
\begin{cor}\label{strangepres}
Write $Z:=\sum_{\ell=1}^rE_{\ell,\ell}$ for the generator of the centre of $\mathfrak{gl}_r(\mathbb{C})$.
Set $K_j:=E_{j,j+1}-E_{j+1,j}$ for $1\leq j<r$ and write
\[
K_r:=iE_{r,r}=\frac{i}{r}\Bigl(Z-\sum_{j=1}^{r-1}j\eta(h_j)\Bigr).
\]
Then $\mathfrak{gl}_r(\mathbb{C})$ is generated by $K_1,\ldots,K_r$. The defining relations of $\mathfrak{gl}_r(\mathbb{C})$ in terms of the generators $K_1,\ldots,K_r$ are 
\begin{equation*}
\begin{split}
[K_j,K_k]&=0,\qquad\qquad\,\, |j-k|>1,\\
[K_j,[K_j,K_{j+1}]]&=-K_{j+1},\qquad 1\leq j<r-1,\\
[K_{j+1},[K_{j+1},K_{j}]]&=-K_{j},\qquad\quad 1\leq j<r,\\
[K_{r-1},[K_{r-1},[K_{r-1},K_r]]]&=-4[K_{r-1},K_r].
\end{split}
\end{equation*}
\end{cor}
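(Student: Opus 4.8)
The plan is to transport the Dolan–Grady type presentation of $\mathcal{L}(A)=\mathcal{L}$ for $A$ of type $C_r$, which by Theorem \ref{mainTHM}\,\textbf{a} is isomorphic to $\mathfrak{k}$, through the explicit Lie algebra isomorphism $\eta:\mathfrak{k}\xrightarrow{\sim}\mathfrak{gl}_r(\mathbb{C})$. Since $\eta(Y_j)=K_j$ for $1\le j<r$ and $\eta(Y_r)=iE_{r,r}=K_r$ by the formulas displayed just above the corollary, the generators $Y_1,\dots,Y_r$ of $\mathfrak{k}$ are carried to $K_1,\dots,K_r$; hence $\mathfrak{gl}_r(\mathbb{C})$ is generated by $K_1,\dots,K_r$ and its defining relations in these generators are the images under $\eta$ of the inhomogeneous Serre relations \eqref{DGrelations} for the type $C_r$ Cartan matrix. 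So the entire content of the corollary is the bookkeeping: write down the Cartan matrix of type $C_r$, read off the integers $c_s^{ij}[1-a_{ij}]$ from the list \eqref{integer5two}, and verify the stated identification $K_r=\frac{i}{r}\bigl(Z-\sum_{j=1}^{r-1}j\,\eta(h_j)\bigr)$.

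First I would record the type $C_r$ Cartan matrix: $a_{jk}=0$ for $|j-k|>1$; along the subdiagonal one has $a_{j,j+1}=a_{j+1,j}=-1$ for $1\le j\le r-2$, while the short–long pair at the end gives $a_{r-1,r}=-1$ and $a_{r,r-1}=-2$ (using the convention that $\alpha_r=2\epsilon_r$ is the long root, consistent with $(\epsilon_i,\epsilon_j)=\tfrac12\delta_{ij}$ fixed earlier). Plugging these into \eqref{integer5two}: $a_{jk}=0$ yields $[K_j,K_k]=0$ for $|j-k|>1$; $a_{jk}=-1$ yields $[K_j,[K_j,K_{j+1}]]=-K_{j+1}$ for $1\le j<r-1$ and $[K_{j+1},[K_{j+1},K_j]]=-K_j$ for $1\le j<r$; and the single case $a_{r,r-1}=-2$ yields $[K_{r-1},[K_{r-1},[K_{r-1},K_r]]]=-4[K_{r-1},K_r]$. (Note $a_{r-1,r}=-1$ already produced the third family of relations at $j=r-1$, and $a_{r,r-1}=-2$ the last one; there is exactly one asymmetric pair, so no other special relation appears.) These are precisely the relations listed in the corollary, so once the generator identification is in place the statement follows from Theorem \ref{mainTHM}\,\textbf{a} and the fact that $\eta$ is an isomorphism.

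It remains to justify the alternative expression for $K_r$. From $\eta(h_j)=E_{j,j}-E_{j+1,j+1}$ ($1\le j<r$) one computes $\sum_{j=1}^{r-1} j\,\eta(h_j)=\sum_{j=1}^{r-1} j(E_{j,j}-E_{j+1,j+1})$, which telescopes to $\sum_{\ell=1}^{r-1}E_{\ell,\ell}-(r-1)E_{r,r}=Z-rE_{r,r}$ with $Z=\sum_{\ell=1}^r E_{\ell,\ell}$; hence $E_{r,r}=\frac1r\bigl(Z-\sum_{j=1}^{r-1}j\,\eta(h_j)\bigr)$ and $K_r=iE_{r,r}=\frac{i}{r}\bigl(Z-\sum_{j=1}^{r-1}j\,\eta(h_j)\bigr)$, as claimed. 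This also makes transparent that $K_1,\dots,K_r$ generate all of $\mathfrak{gl}_r(\mathbb{C})$ and not merely $\mathfrak{sl}_r(\mathbb{C})$: the generators $K_1,\dots,K_{r-1}$ together with their brackets span the image $\eta(\mathfrak{k}')$ where $\mathfrak{k}'$ is the part coming from the $A_{r-1}$-subdiagram, which is $\mathfrak{sl}_r$, while $K_r$ supplies the central direction $Z$ modulo $\mathfrak{sl}_r$.

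The computations here are entirely routine; the only place to be careful is the normalization convention that pins down which of $a_{r-1,r}$, $a_{r,r-1}$ equals $-1$ and which equals $-2$, i.e.\ which simple root is long — an error there would swap $K_{r-1}$ and $K_r$ in the cubic Dolan–Grady relation. With the convention $\alpha_r=2\epsilon_r$ and $(\epsilon_i,\epsilon_i)=\tfrac12$ fixed in the text, $\alpha_r$ is long and $\alpha_{r-1}$ is short, so $a_{r,r-1}=\alpha_{r-1}(h_r)=-2$ and $a_{r-1,r}=\alpha_r(h_{r-1})=-1$, giving the cubic relation in $\operatorname{ad}K_{r-1}$ acting on $K_r$ exactly as stated. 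So there is no genuine obstacle; the proof is a short verification once Theorem \ref{mainTHM}\,\textbf{a} is invoked.
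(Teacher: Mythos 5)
Your overall route coincides with the paper's: the corollary is obtained by transporting the presentation of Theorem \ref{mainTHM}\,\textbf{a} for the type $C_r$ Cartan matrix through the explicit isomorphism $\eta$, using $\eta(Y_j)=K_j$, reading off the relations from \eqref{integer5two}, and verifying the telescoping identity $\sum_{j=1}^{r-1}j\,\eta(h_j)=Z-rE_{r,r}$; those parts of your write-up, including the remark on why the $K_j$ generate all of $\mathfrak{gl}_r(\mathbb{C})$ and not just $\mathfrak{sl}_r(\mathbb{C})$, are fine.

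However, the one step you yourself flag as delicate --- which of the two asymmetric Cartan entries equals $-2$ --- is done backwards. With the paper's convention $a_{ij}=\alpha_j(h_i)$ and the explicit coroots of Section \ref{section4} one has $a_{r-1,r}=\alpha_r(h_{r-1})=2\epsilon_r(h_{r-1})=-2$ and $a_{r,r-1}=\alpha_{r-1}(h_r)=-1$, the opposite of what you assert. This matters because \eqref{DGrelations} reads $\sum_s c_s^{ij}[1-a_{ij}]\,(\textup{ad}\,B_i)^sB_j=0$, so the entry with $a_{ij}=-2$ produces the cubic relation with $\textup{ad}\,B_i$ acting on $B_j$. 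Taking your assignment $a_{r,r-1}=-2$ literally would give $(\textup{ad}\,K_r)^3K_{r-1}=-4[K_r,K_{r-1}]$, which is false in $\mathfrak{gl}_r(\mathbb{C})$: with $K_r=iE_{r,r}$ and $K_{r-1}=E_{r-1,r}-E_{r,r-1}$ one computes $(\textup{ad}\,K_r)^2K_{r-1}=-K_{r-1}$, hence $(\textup{ad}\,K_r)^3K_{r-1}=-[K_r,K_{r-1}]$; indeed $a_{r,r-1}=-1$ is exactly what yields the $j=r-1$ instance $[K_r,[K_r,K_{r-1}]]=-K_{r-1}$ of your third family. Your final list of relations matches the corollary only because you swap indices a second time when asserting the conclusion (``giving the cubic relation in $\textup{ad}\,K_{r-1}$ acting on $K_r$''), so the derivation is internally inconsistent precisely at the point that needed checking. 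Correcting the two entries --- $a_{r-1,r}=-2$ feeding the cubic Dolan--Grady relation with $i=r-1$, $j=r$, and $a_{r,r-1}=-1$ feeding the quadratic one with $i=r$, $j=r-1$ --- repairs the argument, which is then the same bookkeeping the paper intends.
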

\begin{rema}
In a similar manner one can write down an explicit Dolan-Grady type presentation
of $\mathfrak{so}_{r+1}(\mathbb{C})$ using the fact that $\mathfrak{so}_{r+1}(\mathbb{C})$
is the fix-point Lie subalgebra for the Chevalley involution of $\mathfrak{sl}_{r+1}(\mathbb{C})$ (see Example \ref{example}). 
\end{rema}

Write $\Phi=\Phi^s\cup\Phi^\ell$ 
with $\Phi^s$ the set of short roots and $\Phi^\ell$ the set of long roots. For $r=1$ we set
$\Phi^\ell=\Phi$. 
The short and long positive roots are
\[
\Phi^s_+=\{\epsilon_j\pm\epsilon_k\,\, | \,\, 1\leq j<k\leq r\},\qquad \Phi^\ell_+=
\{2\epsilon_\ell \,\, | \,\, 1\leq \ell\leq r\}.
\]

Note that $\mathcal{E}_A=\{r\}$ for the Cartan matrix $A=(\alpha_j(h_i))_{i,j=1}^r$ of 
$\mathfrak{sp}_r(\mathbb{C})$. Hence $\mathfrak{k}$ has a one-dimensional space
$\textup{ch}(\mathfrak{k})$ of one-dimensional representations, see Proposition \ref{onedimrep}. 
Concretely, for $t\in\mathbb{C}$ write $\chi_t\in\textup{ch}(\mathfrak{k})$ for the one-dimensional representation such that $\chi_t(Y_i)=0$ for $1\leq i<r$ and $\chi_t(Y_r)=t$ (see Proposition \ref{onedimrep}). The explicit analysis
above now allows one to compute $\chi_t$ on the Chevalley basis vectors $y_\alpha\in\mathfrak{k}$ ($\alpha\in\Phi_+$).
\begin{lem}\label{ExplicitChar}
Let $\mathfrak{g}=\mathfrak{sp}_r(\mathbb{C})$ and $\mathfrak{k}$ be the fixed-point Lie subalgebra of $\mathfrak{sp}_r(\mathbb{C})$ with respect to $\omega$ (see \eqref{fixedpointsp}).  Then
\begin{equation*}
\chi_t(y_\alpha)=
\begin{cases}
0\quad &\hbox{ if }\,\, \alpha\in\Phi^s_+,\\
t\quad &\hbox{ if }\,\, \alpha\in\Phi^\ell_+.
\end{cases}
\end{equation*}
\end{lem}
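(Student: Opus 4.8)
The plan is to transport the character $\chi_t$ through the Lie algebra isomorphism $\eta:\mathfrak{k}\overset{\sim}{\longrightarrow}\mathfrak{gl}_r(\mathbb{C})$ and to use that $[\mathfrak{gl}_r(\mathbb{C}),\mathfrak{gl}_r(\mathbb{C})]=\mathfrak{sl}_r(\mathbb{C})$ is precisely the space of traceless matrices. Since $\chi_t$ is a one-dimensional, hence abelian, representation, it annihilates $[\mathfrak{k},\mathfrak{k}]$; equivalently, $\chi_t\circ\eta^{-1}$ vanishes on every traceless matrix. Thus it suffices to decide, for each $\alpha\in\Phi_+$, whether $\eta(y_\alpha)$ is traceless, using the explicit formulas \eqref{sigmabasis}.

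First I would treat the short positive roots. If $\alpha\in\Phi^s_+$, then $\alpha=\epsilon_j-\epsilon_k$ or $\alpha=\epsilon_j+\epsilon_k$ with $1\leq j<k\leq r$, and by \eqref{sigmabasis} we have $\eta(y_\alpha)=E_{j,k}-E_{k,j}$ or $\eta(y_\alpha)=i(E_{j,k}+E_{k,j})$ respectively; since $j\neq k$, in both cases $\eta(y_\alpha)$ is off-diagonal, hence traceless. Therefore $\eta(y_\alpha)\in[\mathfrak{gl}_r(\mathbb{C}),\mathfrak{gl}_r(\mathbb{C})]$ and $\chi_t(y_\alpha)=0$ for all $\alpha\in\Phi^s_+$.

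Next I would treat the long positive roots $2\epsilon_\ell$ ($1\leq\ell\leq r$). By \eqref{sigmabasis}, $\eta(y_{2\epsilon_\ell})=iE_{\ell,\ell}$, so $\eta(y_{2\epsilon_\ell})-\eta(y_{2\epsilon_r})=i(E_{\ell,\ell}-E_{r,r})$ is traceless; hence $\chi_t(y_{2\epsilon_\ell})=\chi_t(y_{2\epsilon_r})$. Since $\alpha_r=2\epsilon_r$ and $Y_r=y_{\alpha_r}$, this gives $\chi_t(y_{2\epsilon_\ell})=\chi_t(Y_r)=t$. The same computation settles the degenerate case $r=1$, where $\Phi^s_+=\emptyset$ and $\Phi^\ell_+=\{2\epsilon_1\}$. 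Putting the two cases together yields the asserted values of $\chi_t$ on the Chevalley-type basis $\{y_\alpha\}_{\alpha\in\Phi_+}$ of $\mathfrak{k}$.

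I do not expect a real obstacle here: the proof uses only the explicit basis \eqref{sigmabasis}, the identification $Y_r=y_{2\epsilon_r}$, and $[\mathfrak{gl}_r(\mathbb{C}),\mathfrak{gl}_r(\mathbb{C})]=\mathfrak{sl}_r(\mathbb{C})$. One could instead argue intrinsically inside $\mathfrak{k}$ using the structure constants \eqref{yalphastructure} — for example $[y_{\epsilon_j-\epsilon_k},y_{2\epsilon_k}]$ is a nonzero multiple of $y_{\epsilon_j+\epsilon_k}$, forcing $\chi_t(y_{\epsilon_j+\epsilon_k})=0$, and $[y_{\epsilon_\ell-\epsilon_r},y_{\epsilon_\ell+\epsilon_r}]$ relates $y_{2\epsilon_\ell}$ to $y_{2\epsilon_r}$ — but this would require tracking the signs of the integers $\kappa_{\alpha,\beta}$, a bookkeeping that the passage through $\eta$ avoids.
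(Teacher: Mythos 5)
Your proof is correct and follows essentially the same route as the paper: transport $\chi_t$ through $\eta$, note that a one-dimensional representation of $\mathfrak{gl}_r(\mathbb{C})$ kills $\mathfrak{sl}_r(\mathbb{C})$, and read off the values on the explicit matrices \eqref{sigmabasis}, using $Y_r=y_{2\epsilon_r}$ for the long roots. The extra details you supply (tracelessness of the off-diagonal images, the difference $i(E_{\ell,\ell}-E_{r,r})$) are exactly what the paper leaves implicit.
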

\begin{proof}
Consider the one-dimensional representation $\chi_t\circ\eta^{-1}$ of $\mathfrak{gl}_r(\mathbb{C})$. It vanishes on $\mathfrak{sl}_r(\mathbb{C})$, hence
\[
\chi_t(y_\alpha)=0\qquad (\alpha\in\Phi_+^s)
\]
since $\eta(y_\alpha)\in\mathfrak{sl}_r(\mathbb{C})$
for all $\alpha\in\Phi_+^s$. For $1\leq \ell\leq r$ we have
\begin{equation*}
\chi_t(y_{2\epsilon_\ell})=(\chi_t\circ\eta^{-1})(iE_{\ell,\ell})
=(\chi_t\circ\eta^{-1})(iE_{r,r})=\chi_t(Y_r)=t,
\end{equation*}
where we have used that $\chi_t\circ\eta^{-1}$ vanishes on $\mathfrak{sl}_r(\mathbb{C})$ for the
second equality.
\end{proof}

We now extend this result to the associated untwisted affine case $\widetilde{\mathfrak{g}}=
\widetilde{\mathfrak{sp}}_r(\mathbb{C})$.

The highest root $\theta\in \Phi_+^\ell$ for $\mathfrak{g}=\mathfrak{sp}_r(\mathbb{C})$
is $\theta=2\epsilon_1$. Hence 
\[
\widetilde{\Pi}=\{\alpha_0=-2\epsilon_1+\delta,\alpha_1,\ldots,\alpha_r\}
\]
is the set of simple roots of $\widetilde{\mathfrak{sp}}_r(\mathbb{C})$ and
$\widetilde{\Pi}^\vee=\{h_0=c-h_\theta,h_1,\ldots,h_r\}$. Concretely,
\[
h_0=c-\left(\begin{matrix} E_{1,1} & 0\\ 0 & -E_{1,1}\end{matrix}\right).
\]
Take 
\[
E_0:=e_{-2\epsilon_1}=\left(\begin{matrix} 0 & 0\\ E_{1,1} & 0\end{matrix}\right),\qquad F_0:=e_{2\epsilon_1}=\left(\begin{matrix} 0 & E_{1,1}\\ 0 & 0\end{matrix}\right).
\]
As extension of the Chevalley generators $e_1,\ldots,e_r,f_1,\ldots,f_r$ of $\mathfrak{g}=\mathfrak{sp}_r(\mathbb{C})$ to Chevalley generators of $\widetilde{\mathfrak{sp}}_r(\mathbb{C})$ 
(see Subsection \ref{affineSection}) we thus add the generators
\[
e_0:=e_{-2\epsilon_1}[1],\qquad f_0:=e_{2\epsilon_1}[-1].
\]
With this choice of $e_0$ and $f_0$, the Chevalley basis $\{h_i,e_\alpha\}_{1\leq i\leq r, \alpha\in\Phi}$ 
of $\mathfrak{g}=\mathfrak{sp}_r(\mathbb{C})$ satisfies the required properties {\bf a-c} from
Subsection \ref{affineSection}. In particular, it extends to an integral basis of 
$\widetilde{\mathfrak{sp}}_r(\mathbb{C})$ and gives rise to the integral basis
$\{y_\gamma^{(i)}\}_{\gamma\in\widetilde{\Phi}_+, 
1\leq i\leq m(\gamma)}$ of the fix-point Lie subalgebra 
$\widetilde{\mathfrak{k}}$ of $\widetilde{\mathfrak{sp}}_r(\mathbb{C})$ (see \eqref{integralbasisk}).
We have the formulas $Y_j:=e_j-f_j=y_{\alpha_j}$ ($0\leq j\leq r$) for the generators of $\widetilde{\mathfrak{k}}$. Recall the notation $y_{-\gamma}^{(i)}:=-y_\gamma^{(i)}$ for $\gamma\in\widetilde{\Phi}_+$.

Note that $\mathcal{E}_{\widetilde{A}}=\{0,r\}$ for the affine Cartan matrix $\widetilde{A}=(\alpha_j(h_i))_{i,j=0}^r$ of 
$\widetilde{\mathfrak{sp}}_r(\mathbb{C})$. Hence $\widetilde{\mathfrak{k}}$ has a two-dimensional space
$\textup{ch}(\widetilde{\mathfrak{k}})$ of one-dimensional representations, see Proposition \ref{onedimrep}. Concretely, for $s,t\in\mathbb{C}$ write $\chi_{s,t}\in\textup{ch}(\widetilde{\mathfrak{k}})$ for the one-dimensional
representation such that $\chi_{s,t}(Y_0)=s$, $\chi_{s,t}(Y_i)=0$ ($1\leq i<r$) and
$\chi_{s,t}(Y_r)=t$ (see Proposition \ref{onedimrep}). We have the following extension of Lemma \ref{ExplicitChar}.
\begin{prop}\label{ExplicitCharAff}
Let $\widetilde{\mathfrak{k}}$ be the fix-point Lie subalgebra of 
$\widetilde{\mathfrak{sp}}_r(\mathbb{C})$ with respect to the Chevalley involution $
\widetilde{\omega}$. Then
\begin{equation*}
\begin{split}
\chi_{s,t}\bigl(y_{\pm\alpha+2k\delta}\bigr)&=\pm t \qquad (\alpha\in\Phi_+^\ell,\,\, k\in\mathbb{Z}),\\
\chi_{s,t}\bigl(y_{\mp\alpha+(2k+1)\delta}\bigr)&=\pm s \qquad (\alpha\in\Phi_+^\ell,\,\, 
k\in\mathbb{Z}),\\
\chi_{s,t}\bigl(y_{\alpha+k\delta}\bigr)&=0 \qquad\,\,\,\, (\alpha\in\Phi^s,\,\, k\in\mathbb{Z}).
\end{split}
\end{equation*}
Furthermore, $\chi_{s,t}(y_{k\delta}^{(i)})=0$ for $k\in\mathbb{Z}\setminus\{0\}$ and $1\leq i\leq r$.
\end{prop}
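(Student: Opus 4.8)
The plan is to exploit the generators-and-relations description of $\widetilde{\mathfrak{k}}\simeq\widetilde{\mathcal{L}}=\mathcal{L}(\widetilde{A})$, together with the explicit structure constants of Proposition 3.8 (the identities \eqref{rel1}--\eqref{rel4}), to propagate the values of a character from the generators $Y_0,\dots,Y_r$ out to the whole Chevalley-type basis. Since $\chi_{s,t}$ is a one-dimensional representation it factors through $\widetilde{\mathfrak{k}}/[\widetilde{\mathfrak{k}},\widetilde{\mathfrak{k}}]$, so every bracket must be sent to $0$; each relation in Proposition 3.8 whose left-hand side is a bracket therefore gives a linear constraint among the $\chi_{s,t}(y_\gamma^{(i)})$. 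First I would observe that \eqref{rel4} says $[y_{\ell\delta}^{(i)},y_{m\delta}^{(j)}]=0$ does not directly help, but \eqref{rel3} with suitable choices forces $\chi_{s,t}(y_{k\delta}^{(i)})=0$: taking $\alpha$ with $\alpha(h_i)\neq 0$, the relation $[y_{\ell\delta}^{(i)},y_{\alpha+m\delta}]=\alpha(h_i)(y_{\alpha+(\ell+m)\delta}-y_{\alpha+(m-\ell)\delta})$ shows $\chi_{s,t}\bigl(y_{\alpha+(\ell+m)\delta}\bigr)=\chi_{s,t}\bigl(y_{\alpha+(m-\ell)\delta}\bigr)$; iterating, $\chi_{s,t}(y_{\alpha+k\delta})$ depends only on the parity of $k$ and on $\alpha$. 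Similarly \eqref{rel2} with $\alpha,\beta\in\Phi$, $\alpha\neq\pm\beta$, relates $\chi_{s,t}$ on $y_{\alpha+\beta+(\ell+m)\delta}$ and $y_{\alpha-\beta+(\ell-m)\delta}$ (their $\chi$-values must be equal, as the bracket is $0$), while \eqref{rel1} shows $\sum_i k_i(\alpha)\,\chi_{s,t}(y_{(m\mp\ell)\delta}^{(i)})=0$.

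The key combinatorial point is then the following: for short roots $\alpha\in\Phi^s$ one can write $\alpha=\beta-\gamma$ or $\alpha=\beta+\gamma$ with $\beta,\gamma\in\Phi$ and $\beta,\gamma$ NOT short — concretely, $\epsilon_i-\epsilon_j = 2\epsilon_i - (\epsilon_i+\epsilon_j)$ and $\epsilon_i+\epsilon_j = 2\epsilon_i - (\epsilon_i-\epsilon_j)$... but that reintroduces short roots, so instead I would use that every short root is a difference of two long roots only up to the structure-constant bookkeeping; the cleaner route is to use Lemma 4.6 (the finite $\mathfrak{sp}_r$ computation: $\chi_t(y_\alpha)=0$ for $\alpha\in\Phi^s_+$, $=t$ for $\alpha\in\Phi^\ell_+$) as the $k=0$ base case, via the canonical embedding $\mathfrak{k}\hookrightarrow\widetilde{\mathfrak{k}}$ and the fact that $\chi_{s,t}$ restricted to $\mathfrak{k}$ is $\chi_t$ (because $\chi_{s,t}(Y_i)=0=\chi_t(Y_i)$ for $1\leq i<r$ and $\chi_{s,t}(Y_r)=t=\chi_t(Y_r)$, and $\mathfrak{k}$ is generated by $Y_1,\dots,Y_r$). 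This pins down $\chi_{s,t}(y_\alpha)$ for all $\alpha\in\Phi_+$ (the $k=0$ slice). For the $k=\pm1$ slice, note $Y_0=y_{\alpha_0}=y_{-\theta+\delta}$ with $\theta=2\epsilon_1$ long, so $\chi_{s,t}(y_{-\theta+\delta})=s$; then \eqref{rel2} and \eqref{rel3} (with $k$-dependence) let one transport the value $s$ across all of $\Phi^\ell$ in degree $1$ and $0$ across all of $\Phi^s$ in odd degree, matching the sign conventions $y_{-\gamma}^{(i)}=-y_\gamma^{(i)}$. Finally, periodicity in $k\mapsto k+2$ follows from \eqref{rel3}: $[y_{2\delta}^{(i)},y_{\alpha+k\delta}]=0$ in the abelianization gives $\chi_{s,t}(y_{\alpha+(k+2)\delta})=\chi_{s,t}(y_{\alpha+k\delta})$ once we know $\chi_{s,t}(y_{2\delta}^{(i)})=0$, which is the statement already established. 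This yields the stated three cases and the vanishing on imaginary root vectors.

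The main obstacle I anticipate is the careful sign/parity bookkeeping: one must verify that the transport relations are consistent (no contradictions forcing $s=t=0$) and that the $\pm$ in $\chi_{s,t}(y_{\pm\alpha+2k\delta})=\pm t$ comes out correctly under $y_{-\gamma}^{(i)}=-y_\gamma^{(i)}$, and that for a short root $\alpha$ written via \eqref{rel2} the nonzero structure constants $\kappa_{\beta,\pm\gamma}$ actually connect it to already-known basis vectors rather than leaving it undetermined. A safe way to sidestep the combinatorial fragility is: first prove $\chi_{s,t}(y_{k\delta}^{(i)})=0$ for all $k\neq0$ and $i$ using \eqref{rel3} with $\alpha(h_i)\neq0$ (this is unconditional), then prove the parity-$2$ periodicity, then it suffices to determine $\chi_{s,t}$ on the two slices $k=0$ (done by restriction to $\mathfrak{k}$ and Lemma 4.6) and $k=1$ (done starting from $\chi_{s,t}(y_{-\theta+\delta})=s$ and propagating by the Weyl-group-like action encoded in \eqref{rel2}--\eqref{rel3}, using that the affine Weyl group acts transitively on roots of a fixed length and on $\widetilde{\Phi}^{\textup{re}}$ modulo $2\delta$). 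I would write the propagation step as: $\chi_{s,t}$ is constant on each $W$-orbit of real roots modulo the $2\delta$-shift, because the bracket relations express how $\chi_{s,t}(y_\gamma)$ transforms under the simple reflections, and the only orbits are $\{$long, even $k\}$, $\{$long, odd $k\}$, $\{$short, any $k\}$ — on which the values are forced to be $\pm t$, $\pm s$, $0$ respectively by the generator values. Then Lemma 4.6 fixes the representative and we are done.
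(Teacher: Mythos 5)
Your general framework is the right one (a character kills every bracket, so \eqref{rel1}--\eqref{rel4} become linear constraints; the $k=0$ slice comes from Lemma \ref{ExplicitChar} via restriction to $\mathfrak{k}$; 2-periodicity in $k$ comes from \eqref{rel3}), and indeed the paper uses exactly these ingredients. But the two crucial propagation steps are left as gaps. First, the vanishing on short-root vectors: you abandon the decomposition idea because writing a short root as a sum/difference of roots ``reintroduces short roots,'' but this worry is based on a misconception --- you do not need to know $\chi_{s,t}$ on $y_\beta$ or $y_\gamma$ at all. The point (and the paper's argument) is to choose $\beta,\gamma\in\Phi_+$ with $\alpha=\beta+\gamma$ and $\beta-\gamma\notin\Phi$ (or $\alpha=\alpha_i=\beta-\gamma$ with $\beta+\gamma\notin\Phi$); then \eqref{rel2} makes $\kappa_{\beta,\gamma}\,y_{\alpha+k\delta}$ itself a bracket, with $\kappa_{\beta,\gamma}\neq 0$, so $\chi_{s,t}(y_{\alpha+k\delta})=0$ for \emph{every} $k$ in one stroke. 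Your fallback (restriction to $\mathfrak{k}$ at $k=0$ plus parity) only controls short roots at even $k$; for odd $k$ you appeal to ``the affine Weyl group acts transitively on roots of a fixed length,'' but no action of the Weyl group on characters of $\widetilde{\mathfrak{k}}$ has been established, and transitivity of an orbit does not by itself transport values of $\chi_{s,t}$ --- the transport has to go through explicit bracket identities whose structure constants you have not checked.

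The same gap affects the long roots in odd degree. You correctly get the base point $\chi_{s,t}(y_{-\theta+\delta})=\chi_{s,t}(Y_0)=s$, but the step from $2\epsilon_1$ to all of $\Phi_+^\ell$ is precisely where the consistency issue you flag (``no contradictions forcing $s=t=0$'', correct coefficients) must be resolved, and invoking Weyl transitivity does not resolve it. The paper does this by an explicit induction on $j$: writing $-2\epsilon_j=\beta+\gamma$ with $\beta=-\epsilon_{j-1}-\epsilon_j$, $\gamma=\epsilon_{j-1}-\epsilon_j$, relation \eqref{rel2} gives $[y_{\beta+\delta},y_\gamma]=\kappa_{\beta,\gamma}y_{-2\epsilon_j+\delta}-\kappa_{\beta,-\gamma}y_{-2\epsilon_{j-1}+\delta}$ with $\kappa_{\beta,\gamma}=\kappa_{\beta,-\gamma}=2$, so killing the bracket transports the value $s$ from $2\epsilon_{j-1}$ to $2\epsilon_j$; this is the concrete computation your sketch is missing. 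Finally, a smaller but real slip: \eqref{rel3} cannot give $\chi_{s,t}(y_{k\delta}^{(i)})=0$ ``unconditionally,'' since no imaginary root vector occurs on its right-hand side; the correct source is \eqref{rel1} with $\alpha=\alpha_i$ simple, where $k_j(\alpha_i)=\delta_{ij}$ gives $[y_{\alpha_i},y_{\alpha_i+k\delta}]=y_{k\delta}^{(i)}$, exhibiting $y_{k\delta}^{(i)}$ itself as a bracket (you mention \eqref{rel1} in passing, but attribute the conclusion to the wrong relation).
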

\begin{proof}
If $\alpha\in\Phi_+^s$ is not a simple root then it is easy to check that there exists $\beta,\gamma\in
\Phi_+$ such that $\alpha=\beta+\gamma$ and $\beta-\gamma\not\in\Phi$. By \eqref{rel2} we get
$[y_\beta, y_{\gamma+k\delta}]=\kappa_{\beta,\gamma}y_{\alpha+k\delta}$ with 
$\kappa_{\beta,\gamma}\not=0$. Hence $\chi_{s,t}(y_{\alpha+k\delta})=0$ for all $k\in\mathbb{Z}$. 
If $\alpha=\alpha_i\in\Phi_+^s$ ($1\leq i<r$) then there exists $\beta,\gamma\in\Phi_+$ such that
$\alpha_i=\beta-\gamma$ 
and $\beta+\gamma\not\in\Phi$.
Again by \eqref{rel2} we get $\chi_{s,t}(y_{\alpha_i+k\delta})=0$ for all $k\in\mathbb{Z}$. Consequently
$\chi_{s,t}(y_{\alpha+k\delta})=0$ for $\alpha\in\Phi^s$ and $k\in\mathbb{Z}$.

By \eqref{rel3} we have for $\alpha\in\Phi$,
\[
\sum_{i=1}^rk_i(\alpha)\lbrack y_\delta^{(i)}, y_{\alpha+k\delta}\rbrack=
2(y_{\alpha+(k+1)\delta}-y_{\alpha+(k-1)\delta}).
\]
Hence 
\begin{equation}\label{shiftinvariant}
\chi_{s,t}(y_{\alpha+(k+1)\delta})=\chi_{s,t}(y_{\alpha+(k-1)\delta})
\end{equation}
for 
$\alpha\in\Phi$ and $k\in\mathbb{Z}$. In particular, the formula $\chi_{s,t}(y_{\pm\alpha+2k\delta})=\pm t$
for $\alpha\in\Phi_+^\ell$ and $k\in\mathbb{Z}$ is valid if it holds true for $k=0$. This in turn follows from Lemma \ref{ExplicitChar}. 

Next we prove that $\chi_{s,t}(y_{\mp\alpha+(2k+1)\delta})=\pm s$ for $\alpha\in\Phi_+^\ell$
and $k\in\mathbb{Z}$. By \eqref{shiftinvariant} if suffices to show that 
$\chi_{s,t}(y_{-\alpha+\delta})=s$ for $\alpha\in\Phi_+^\ell$. This is true for $\alpha=\theta=2\epsilon_1$
since $y_{-\theta+\delta}=y_{\alpha_0}=Y_0$. Suppose it is true for $\alpha=2\epsilon_i$ with 
$i=1,\ldots,j-1$ and $2\leq j\leq r$. Then $-2\epsilon_j=\beta+\gamma$ with 
$\beta=-\epsilon_{j-1}-\epsilon_j$ and $\gamma=\epsilon_{j-1}-\epsilon_j$, and $\beta-\gamma=
-2\epsilon_{j-1}$. Hence
\begin{equation}\label{almoststep}
\lbrack y_{\beta+\delta},y_\gamma]=\kappa_{\beta,\gamma}y_{-2\epsilon_j+\delta}-
\kappa_{\beta,-\gamma}y_{-2\epsilon_{j-1}+\delta}
\end{equation}
by \eqref{rel2}. By a direct computation, $\kappa_{\beta,\gamma}=2=\kappa_{\beta,-\gamma}$,
hence it follows from \eqref{almoststep} and the induction hypothesis that 
$\chi_{s,t}(y_{-2\epsilon_j+\delta})=s$. This shows that
$\chi_{s,t}(y_{-\alpha+\delta})=s$ for $\alpha\in\Phi_+^\ell$.

Finally, by \eqref{rel1} we have 
\[
\lbrack y_{\alpha_i},y_{\alpha_i+k\delta}\rbrack=y_{k\delta}^{(i)}
\]
for $k\in\mathbb{Z}\setminus\{0\}$ and $1\leq i\leq r$, hence $\chi_{s,t}(y_{k\delta}^{(i)})=0$.
\end{proof}


\end{document}